\newtheorem{theorem}{Theorem}
\theoremstyle{plain}
\newtheorem{corollary}{Corollary}
\newtheorem{lemma}{Lemma}
\newtheorem{proposition}{Proposition}
\newtheorem{remark}{Remark}
\numberwithin{equation}{section}
\begin{document}
\title[On the integral inequalities]{On the Integral inequalities for
mappings whose second derivatives are convex and applications }
\author{Mehmet Zeki Sar\i kaya$^{\star \clubsuit }$}
\address{$^{\clubsuit }$Department of Mathematics,Faculty of Science and
Arts, D\"{u}zce University, D\"{u}zce, Turkey}
\email{sarikayamz@gmail.com}
\thanks{$^{\star }$corresponding author}
\author{Erhan. SET$^{\blacksquare }$}
\address{$^{\blacksquare }$Atat\"{u}rk University, K.K. Education Faculty,
Department of Mathematics, 25240, Campus, Erzurum, Turkey}
\email{erhanset@yahoo.com}
\author{M. Emin Ozdemir$^{\blacklozenge }$}
\address{$^{\blacklozenge }$Graduate School of Natural and Applied Sciences,
A\u{g}r\i\ \.{I}brahim \c{C}e\c{c}en University, A\u{g}r\i , Turkey}
\email{emos@atauni.edu.tr}
\date{}
\subjclass[2000]{ 26D15, 41A55, 26D10 }
\keywords{convex function, Ostrowski inequality and special means.}

\begin{abstract}
In this paper, we establish several new inequalities for some twice
differantiable mappings. Then, we apply these inequalities \ to obtain \ new
midpoint, trapezoid and perturbed trapezoid rules. Finally, some
applications for special means of real numbers are provided.
\end{abstract}

\maketitle

\section{Introduction}

In 1938 Ostrowski obtained a bound for the absolute value of the difference
of a function to its average over a finite interval. The theorem is well
known in the literature as Ostrowski's integral inequality \cite{Ostrowski}:

\begin{theorem}
Let $f:[a,b]\mathbb{\rightarrow R}$ be a differentiable mapping on $(a,b)$
whose derivative $f^{\prime }:(a,b)\mathbb{\rightarrow R}$ is bounded on $%
(a,b),$ i.e., $\left\Vert f^{\prime }\right\Vert _{\infty }=\underset{t\in
(a,b)}{\overset{}{\sup }}\left\vert f^{\prime }(t)\right\vert <\infty .$
Then, the inequality holds:%
\begin{equation}
\left\vert f(x)-\frac{1}{b-a}\int\limits_{a}^{b}f(t)dt\right\vert \leq \left[
\frac{1}{4}+\frac{(x-\frac{a+b}{2})^{2}}{(b-a)^{2}}\right] (b-a)\left\Vert
f^{\prime }\right\Vert _{\infty }  \label{1}
\end{equation}%
for all$\ x\in \lbrack a,b].$ The constant $\frac{1}{4}$ is the best
possible.
\end{theorem}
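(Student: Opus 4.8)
The plan is to derive the result from a Peano-kernel (Montgomery) identity and then estimate the kernel integral. First I would introduce the kernel
\[
p(x,t)=
\begin{cases}
t-a, & a\le t\le x,\\
t-b, & x<t\le b,
\end{cases}
\]
and compute $\int_a^b p(x,t)f'(t)\,dt$ by splitting it at $t=x$ and integrating by parts on each piece. On $[a,x]$ the boundary terms give $(x-a)f(x)$, on $(x,b]$ they give $(b-x)f(x)$, and the two integral remainders combine into $-\int_a^b f(t)\,dt$. Collecting these yields $\int_a^b p(x,t)f'(t)\,dt=(b-a)f(x)-\int_a^b f(t)\,dt$, i.e. the identity
\[
f(x)-\frac{1}{b-a}\int_a^b f(t)\,dt=\frac{1}{b-a}\int_a^b p(x,t)f'(t)\,dt .
\]

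Next I would take absolute values and pull out the sup norm, using $|f'(t)|\le\|f'\|_\infty$ a.e., to obtain the bound $\frac{\|f'\|_\infty}{b-a}\int_a^b|p(x,t)|\,dt$. Since $p(x,t)\ge 0$ fails only in sign convention, on $[a,x]$ we have $|p(x,t)|=t-a$ and on $(x,b]$ we have $|p(x,t)|=b-t$, so a direct integration gives $\int_a^b|p(x,t)|\,dt=\frac{(x-a)^2+(b-x)^2}{2}$. The remaining work is the purely algebraic verification that
\[
\frac{(x-a)^2+(b-x)^2}{2(b-a)}=\left[\frac14+\frac{\left(x-\frac{a+b}{2}\right)^2}{(b-a)^2}\right](b-a),
\]
which I would check by writing $x-a=\left(x-\frac{a+b}{2}\right)+\frac{b-a}{2}$ and $b-x=\frac{b-a}{2}-\left(x-\frac{a+b}{2}\right)$ and expanding; the cross terms cancel and the claimed expression drops out. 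This proves \eqref{1}.

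The hard part will be the sharpness assertion that the constant $\tfrac14$ cannot be improved. Here I would specialize to the midpoint $x=\frac{a+b}{2}$, where the bracket equals $\tfrac14$, and test the inequality on $f(t)=\left|t-\frac{a+b}{2}\right|$: then $\|f'\|_\infty=1$, $f(x)=0$, and $\int_a^b f(t)\,dt=\frac{(b-a)^2}{4}$, so both sides equal $\frac{b-a}{4}$ and equality is attained. The genuine obstacle is that this extremal function has a corner at the midpoint, so it is not differentiable throughout $(a,b)$ and does not literally satisfy the hypotheses; I would therefore replace it by a smooth one-parameter family that rounds off the corner (or argue by an approximation/limiting procedure) to show that the ratio of the two sides can be pushed arbitrarily close to $1$, so any constant smaller than $\tfrac14$ would fail. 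This limiting step is the part that requires genuine care rather than routine computation.
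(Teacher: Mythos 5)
Your argument cannot be checked against an in-paper proof, because the paper offers none: this theorem is quoted in the introduction as Ostrowski's classical 1938 result, with a citation to the original article, and the paper's own proofs begin only with the lemma of Section 2. Judged on its own merits, your proposal is the standard Montgomery-identity proof and it is correct: the kernel identity $\int_a^b p(x,t)f'(t)\,dt=(b-a)f(x)-\int_a^b f(t)\,dt$, the $L_\infty$ estimate, the value $\int_a^b|p(x,t)|\,dt=\frac{(x-a)^2+(b-x)^2}{2}$, and the algebraic reduction to the bracket $\frac14+\frac{(x-\frac{a+b}{2})^2}{(b-a)^2}$ all check out. The one place you can simplify substantially is the sharpness step, which you rightly flag as the delicate point: the smoothing of the corner function $\left|t-\frac{a+b}{2}\right|$ is unnecessary. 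Instead test the inequality with the everywhere-differentiable function $f(t)=t$ at the endpoint $x=b$: the left side equals $\left|b-\frac{a+b}{2}\right|=\frac{b-a}{2}$, while the bracket equals $\frac14+\frac14=\frac12$, so equality is attained by an admissible function; consequently, if $\frac14$ were replaced by any $c<\frac14$, the right side at $x=b$ would become $\left(c+\frac14\right)(b-a)<\frac{b-a}{2}$ and the inequality would fail. This disposes of the best-constant claim in one line, with no limiting procedure and no issue of hypotheses being violated by the extremal function.
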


In 1976, Milovanovic and Pecaric proved a generalization of the Ostrowski
inequality for $n$-times differentiable mappings (see for example \cite[p.468%
]{Milovanovic}). Dragomir and Wang (\cite{DW}, \cite{DW1}) extended the
result (\ref{1}) and applied the extended result to numerical quadrature
rules and to the estimation of error bounds for some special means. Also,
Sofo and Dragomir \cite{Sofo} extended the result (\ref{1}) in the $L_{p}$
norm. Dragomir (\cite{D1}-\cite{D3})\ further extended the (\ref{1}) to
incorporate mappings of bounded variation, Lipschitzian and monotonic
mappings. For recent results and generalizations conserning Ostrowski's
integral inequality see \cite{AD}-\cite{Milovanovic}, \cite{Sofo}, \cite%
{sarikaya}, and the references therein.

In \cite{Cerone1}, Cerone and Dragomir find the following perturbed
trapezoid inequalities:

\begin{theorem}
Let $f:[a,b]\mathbb{\rightarrow R}$ be such that the derivative $f^{\prime }$
is absolutely continuous on $[a,b].$ Then, the inequality holds:%
\begin{eqnarray}
&&\left\vert \int\limits_{a}^{b}f(t)dt-\frac{b-a}{2}\left[ f(b)+f(a)\right] +%
\frac{\left( b-a\right) ^{2}}{8}\left[ f^{\prime }(b)-f^{\prime }(a)\right]
\right\vert  \notag \\
&&  \label{01} \\
&\leq &\left\{ 
\begin{array}{l}
\frac{(b-a)^{3}}{24}\left\Vert f^{\prime \prime }\right\Vert _{\infty }\text{
\ \ \ if \ \ }f^{\prime \prime }\in L_{\infty }[a,b] \\ 
\\ 
\frac{(b-a)^{2+\frac{1}{q}}}{8(2q+1)^{\frac{1}{q}}}\left\Vert f^{\prime
\prime }\right\Vert _{p}\text{ \ \ \ if \ \ }f^{\prime \prime }\in
L_{p}[a,b],\ p>1,\ \frac{1}{p}+\frac{1}{q}=1 \\ 
\\ 
\frac{(b-a)^{2}}{8}\left\Vert f^{\prime \prime }\right\Vert _{1}\text{ \ \ \
if \ \ }f^{\prime \prime }\in L_{1}[a,b]%
\end{array}%
\right.  \notag
\end{eqnarray}%
for all$\ t\in \lbrack a,b].$
\end{theorem}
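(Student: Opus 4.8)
The plan is to establish the perturbed trapezoid inequality through an integration-by-parts identity, then estimate the resulting integral in the three stated norms. The key observation is that the expression on the left-hand side of \eqref{01} should arise from integrating a suitable quadratic (Peano) kernel against the second derivative $f^{\prime\prime}$. First I would derive the identity
\begin{equation}
\int_a^b f(t)\,dt-\frac{b-a}{2}\left[f(b)+f(a)\right]+\frac{(b-a)^2}{8}\left[f^{\prime}(b)-f^{\prime}(a)\right]=\int_a^b p(t)\,f^{\prime\prime}(t)\,dt, \label{kernelid}
\end{equation}
where the kernel is
\begin{equation}
p(t)=\frac{1}{2}\left(t-\frac{a+b}{2}\right)^2-\frac{(b-a)^2}{8}. \label{kerneldef}
\end{equation}
To verify \eqref{kernelid}, I would integrate the right-hand side by parts twice: the first integration by parts differentiates $p$ (giving $p^{\prime}(t)=t-\tfrac{a+b}{2}$) and integrates $f^{\prime\prime}$ to $f^{\prime}$, producing the boundary term involving $f^{\prime}(b)-f^{\prime}(a)$; the second integration by parts then recovers the trapezoidal boundary term $\tfrac{b-a}{2}[f(a)+f(b)]$ and the average integral $\int_a^b f(t)\,dt$. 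The particular constant $\tfrac{(b-a)^2}{8}$ in the definition of $p$ is chosen precisely so that the boundary contributions match, and one checks that $p(a)=p(b)=0$.

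Once \eqref{kernelid} is in hand, the three bounds follow by taking absolute values and applying the appropriate Hölder-type estimate to $\left|\int_a^b p(t)\,f^{\prime\prime}(t)\,dt\right|$. For the $L_\infty$ bound I would write $\left|\int_a^b p(t)f^{\prime\prime}(t)\,dt\right|\le \|f^{\prime\prime}\|_\infty \int_a^b |p(t)|\,dt$ and compute $\int_a^b|p(t)|\,dt$; since $p$ changes sign on $[a,b]$, this requires splitting the interval at the two roots of $p$, and a direct integration yields the constant $\tfrac{(b-a)^3}{24}$. For the $L_p$ bound ($p>1$) I would apply Hölder's inequality to get $\|f^{\prime\prime}\|_p\left(\int_a^b|p(t)|^q\,dt\right)^{1/q}$ and evaluate the $L_q$ norm of the kernel, which after substitution $u=t-\tfrac{a+b}{2}$ reduces to a Beta-type integral producing the factor $(b-a)^{2+1/q}/\bigl(8(2q+1)^{1/q}\bigr)$. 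For the $L_1$ bound I would use $\left|\int_a^b p(t)f^{\prime\prime}(t)\,dt\right|\le \|p\|_\infty\|f^{\prime\prime}\|_1$, where $\|p\|_\infty=\tfrac{(b-a)^2}{8}$ is attained at the endpoints.

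The main obstacle I anticipate is the evaluation of the integral $\int_a^b |p(t)|^q\,dt$ for the middle ($L_p$) case, because the sign change of $p$ forces the computation onto the subinterval where $|p|$ has a clean representation, and matching the resulting constant to the stated form $8(2q+1)^{1/q}$ demands careful bookkeeping of the substitution and the half-length $\tfrac{b-a}{2}$. A convenient simplification is to note that after centering, $p(t)=\tfrac{1}{2}(u^2-c^2)$ with $c=\tfrac{b-a}{2}$, so on the region where the integrand is supported the computation becomes an explicit power-integral; the apparent sign change can be handled by exploiting the even symmetry of $|p|$ about the midpoint, which halves the work. The $L_\infty$ and $L_1$ cases are comparatively routine once \eqref{kernelid} is established, so I would present the kernel identity as the central lemma and treat the three estimates as immediate corollaries.
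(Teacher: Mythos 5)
Your overall strategy (a quadratic Peano kernel, integration by parts, then the three H\"{o}lder-type estimates) is the standard and correct route to this inequality, and it is in fact equivalent to what the paper's own machinery gives: Lemma~\ref{lm} specialized at $x=b$ is exactly such a kernel identity. However, your kernel is wrong, and with it your central identity is false. With
\begin{equation*}
p(t)=\frac{1}{2}\left(t-\frac{a+b}{2}\right)^{2}-\frac{(b-a)^{2}}{8}
\end{equation*}
you have $p(a)=p(b)=0$, so the boundary term $\left[ p(t)f^{\prime }(t)\right]_{a}^{b}$ from the first integration by parts vanishes identically, and no $f^{\prime }(b)-f^{\prime }(a)$ term can ever appear. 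Your own description is self-contradictory here: you cannot simultaneously have $p(a)=p(b)=0$ and have the boundary contributions produce the perturbation term. What your $p$ actually satisfies is the \emph{unperturbed} trapezoid identity
\begin{equation*}
\int_{a}^{b}p(t)f^{\prime \prime }(t)\,dt=\int_{a}^{b}f(t)\,dt-\frac{b-a}{2}\left[ f(a)+f(b)\right] ,
\end{equation*}
and a concrete check refutes your claimed identity: for $f(t)=t^{2}$ on $[0,1]$ the left-hand side of \eqref{01} equals $\tfrac{1}{3}-\tfrac{1}{2}+\tfrac{1}{4}=\tfrac{1}{12}$, while $\int_{0}^{1}p(t)\cdot 2\,dt=-\tfrac{1}{6}$. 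Your subsequent computations would also fail: $p\leq 0$ on all of $[a,b]$ (its only roots are the endpoints $a$ and $b$, so there is no interior sign change to split at), and $\int_{a}^{b}\left\vert p(t)\right\vert dt=\tfrac{(b-a)^{3}}{12}$, not $\tfrac{(b-a)^{3}}{24}$.

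The repair is simply to delete the subtracted constant: take $K(t)=\tfrac{1}{2}\left( t-\tfrac{a+b}{2}\right) ^{2}$. Then $K(a)=K(b)=\tfrac{(b-a)^{2}}{8}$, so the first integration by parts produces exactly the term $\tfrac{(b-a)^{2}}{8}\left[ f^{\prime }(b)-f^{\prime }(a)\right] $, and the second produces the trapezoid term and the integral, giving the correct identity whose left-hand side is that of \eqref{01}. Everything downstream then works and is \emph{simpler} than you anticipated, since $K\geq 0$ and no splitting is needed: $\int_{a}^{b}K(t)\,dt=\tfrac{(b-a)^{3}}{24}$, $\left( \int_{a}^{b}K(t)^{q}dt\right) ^{1/q}=\tfrac{(b-a)^{2+1/q}}{8(2q+1)^{1/q}}$, and $\max_{t\in \lbrack a,b]}K(t)=\tfrac{(b-a)^{2}}{8}$, which are precisely the three stated constants. (For completeness: the paper itself does not prove this theorem, quoting it from Cerone and Dragomir; but its Lemma~\ref{lm} with $x=b$, after the substitution $u=ta+(1-t)b$, reduces exactly to this corrected kernel identity, and the Remark following Corollary~\ref{c1} recovers only the first branch of \eqref{01}.)
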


In recent years a number of authors have considered an error analysis for
some known and some new quadrature formulas. They used an approach from the
inequalities point of view. For example, the midpoint quadrature rule is
considered in \cite{Cerone1},\cite{USK},\cite{PP}, the trapezoid rule is
considered in \cite{Cerone1},\cite{USKMEO},\cite{SSO}. In most cases
estimations of errors for these quadrature rules are obtained by means of
derivatives and integrands.

In this article, we first derive a general integral identity for twice
derivatives functions. Then, we apply this identity to obtain our results
and using functions whose twice derivatives in absolute value at certain
powers are convex, we obtained new inequalities related to the Ostrowski's \
type inequality. Finally, we gave some applications for special means of
real numbers.

\section{Main Results}

In order to prove our main results, we need the following Lemma (see, \cite%
{Liu}):

\begin{lemma}
\label{lm} Let $f:I\subset \mathbb{R}\rightarrow \mathbb{R}$ be a twice
differentiable function on $I^{\circ }$ with $f^{\prime \prime }\in
L_{1}[a,b]$, then%
\begin{eqnarray}
&&\dfrac{1}{b-a}\dint_{a}^{b}f(u)du-\frac{1}{2}\left[ f(x)+f(a+b-x)\right] +%
\frac{1}{2}(x-\frac{a+3b}{4})\left[ f^{\prime }(x)-f^{\prime }(a+b-x)\right]
\notag \\
&&  \label{2} \\
&=&\frac{\left( b-a\right) ^{2}}{2}\dint_{0}^{1}k\left( t\right) f^{\prime
\prime }(ta+(1-t)b)dt  \notag
\end{eqnarray}%
where%
\begin{equation*}
k(t):=\left\{ 
\begin{array}{ll}
t^{2}, & 0\leq t<\frac{b-x}{b-a} \\ 
&  \\ 
\left( t-\frac{1}{2}\right) ^{2}, & \frac{b-x}{b-a}\leq t<\frac{x-a}{b-a} \\ 
&  \\ 
\left( t-1\right) ^{2}, & \frac{x-a}{b-a}\leq t\leq 1%
\end{array}%
\right.
\end{equation*}%
for any $x\in \left[ \frac{a+b}{2},b\right] .$
\end{lemma}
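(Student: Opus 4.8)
The plan is to verify the identity by starting from the right-hand side and transforming it through repeated integration by parts, reading the left-hand side off as the accumulated boundary contributions. First I would introduce the auxiliary function $\phi(t):=f(ta+(1-t)b)$, so that $\phi'(t)=-(b-a)f'(ta+(1-t)b)$ and $\phi''(t)=(b-a)^2 f''(ta+(1-t)b)$. Since $\tfrac{(b-a)^2}{2}\int_0^1 k(t)f''(ta+(1-t)b)\,dt=\tfrac12\int_0^1 k(t)\phi''(t)\,dt$, the whole claim reduces to showing
\[
\frac12\int_0^1 k(t)\,\phi''(t)\,dt
= \frac{1}{b-a}\int_a^b f(u)\,du
-\frac12\bigl[f(x)+f(a+b-x)\bigr]
+\frac12\Bigl(x-\tfrac{a+3b}{4}\Bigr)\bigl[f'(x)-f'(a+b-x)\bigr].
\]

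Because $x\in\left[\frac{a+b}{2},b\right]$, the two breakpoints $t_1:=\frac{b-x}{b-a}$ and $t_2:=\frac{x-a}{b-a}$ satisfy $0\le t_1\le\frac12\le t_2\le 1$, so $k$ equals $t^2$, $\left(t-\frac12\right)^2$, $\left(t-1\right)^2$ on the three consecutive subintervals $[0,t_1]$, $[t_1,t_2]$, $[t_2,1]$. I would split $\int_0^1 k\,\phi''$ accordingly and integrate by parts twice on each piece, using $\int k\phi''=[k\phi']-[k'\phi]+\int k''\phi$. On every piece $k''\equiv 2$, so each double integration by parts returns $2\int\phi$ together with one-sided boundary terms $[k\phi']$ and $-[k'\phi]$.

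Two features make the accounting clean. At the outer endpoints both $k$ and $k'$ vanish ($k(0)=k'(0)=0$ and $k(1)=k'(1)=0$), so they contribute nothing; and the three integral remainders reassemble as $2\int_0^1\phi\,dt=\frac{2}{b-a}\int_a^b f(u)\,du$ after the substitution $u=ta+(1-t)b$. The substantive contributions come entirely from the interior nodes $t_1,t_2$. Collecting the one-sided terms there gives coefficients $t_1-\frac14$ and $t_2-\frac34$ multiplying $\phi'(t_1),\phi'(t_2)$, and $-1$ multiplying each of $\phi(t_1),\phi(t_2)$. Evaluating $\phi(t_1)=f(x)$, $\phi(t_2)=f(a+b-x)$, $\phi'(t_i)=-(b-a)f'(\cdot)$, and using $t_1-\frac14=-(t_2-\frac34)=\frac{a+3b-4x}{4(b-a)}$, one checks that these interior terms collapse to exactly $-\frac12[f(x)+f(a+b-x)]+\frac12\left(x-\frac{a+3b}{4}\right)[f'(x)-f'(a+b-x)]$ after the overall factor $\frac12$ is applied.

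The one step demanding care — and the place I expect errors to hide — is the treatment of the interior nodes. Since $k$ itself is discontinuous at $t_1$ and $t_2$ (it is continuous only in the exceptional case $x=\frac{a+3b}{4}$), the piecewise integration by parts must use the correct one-sided values of $k$ and $k'$ on each subinterval: it is precisely the jump in $k'$ that produces the $-f(x)-f(a+b-x)$ contribution and the jump in $k$ that produces the $f'(x)-f'(a+b-x)$ contribution. Verifying the coefficient $\frac{a+3b-4x}{4(b-a)}$ is the crux, after which the identity follows. No convexity and no integrability beyond $f''\in L_1[a,b]$ is required, so the stated hypotheses suffice throughout.
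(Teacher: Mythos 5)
Your proof is correct and takes essentially the same route as the paper's: both split the integral at the breakpoints $t_1=\frac{b-x}{b-a}$ and $t_2=\frac{x-a}{b-a}$, integrate by parts twice on each of the three pieces, and recover the left-hand side via the substitution $u=ta+(1-t)b$. Your jump-term bookkeeping at the interior nodes (with the coefficients $t_1-\frac14=-(t_2-\frac34)=\frac{a+3b-4x}{4(b-a)}$ and $-1$, all of which check out) is simply a tidier organization of the paper's explicit evaluation of the three integrals $I_1,I_2,I_3$.
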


\begin{proof}
It suffices to note that%
\begin{eqnarray*}
I &=&\dint_{0}^{1}k\left( t\right) f^{\prime \prime }(ta+(1-t)b)dt \\
&& \\
&=&\dint_{0}^{\frac{b-x}{b-a}}t^{2}f^{\prime \prime }(ta+(1-t)b)dt+\dint_{%
\frac{b-x}{b-a}}^{\frac{x-a}{b-a}}\left( t-\frac{1}{2}\right) ^{2}f^{\prime
\prime }(ta+(1-t)b)dt \\
&& \\
&&+\dint_{\frac{x-a}{b-a}}^{1}\left( t-1\right) ^{2}f^{\prime \prime
}(ta+(1-t)b)dt \\
&& \\
&=&I_{1}+I_{2}+I_{3}.
\end{eqnarray*}%
By inegration by parts, we have the following identity%
\begin{eqnarray*}
I_{1} &=&\dint_{0}^{\frac{b-x}{b-a}}t^{2}f^{\prime \prime }(ta+(1-t)b)dt \\
&& \\
&=&\dfrac{t^{2}}{\left( a-b\right) }f^{\prime }(ta+(1-t)b)\underset{0}{%
\overset{\frac{b-x}{b-a}}{\mid }}-\dfrac{2}{a-b}\dint_{0}^{\frac{b-x}{b-a}%
}tf^{\prime }(ta+(1-t)b)dt \\
&& \\
&=&\dfrac{1}{\left( a-b\right) }\left( \frac{b-x}{b-a}\right) ^{2}f^{\prime
}(x)-\dfrac{2}{a-b}\left[ \dfrac{t}{\left( a-b\right) }f(ta+(1-t)b)\underset{%
0}{\overset{\frac{b-x}{b-a}}{\mid }}-\dfrac{1}{a-b}\dint_{0}^{\frac{b-x}{b-a}%
}f(ta+(1-t)b)dt\right] \\
&& \\
&=&-\dfrac{\left( b-x\right) ^{2}}{\left( b-a\right) ^{3}}f^{\prime }(x)-%
\dfrac{2\left( b-x\right) }{\left( b-a\right) ^{3}}f(x)+\dfrac{2}{\left(
b-a\right) ^{2}}\dint_{0}^{\frac{b-x}{b-a}}f(ta+(1-t)b)dt.
\end{eqnarray*}%
Similarly, we observe that%
\begin{eqnarray*}
I_{2} &=&\dint_{\frac{b-x}{b-a}}^{\frac{x-a}{b-a}}\left( t-\frac{1}{2}%
\right) ^{2}f^{\prime \prime }(ta+(1-t)b)dt \\
&& \\
&=&\dfrac{\left( a+b-2x\right) ^{2}}{4\left( b-a\right) ^{3}}\left[
f^{\prime }(x)-f^{\prime }(a+b-x)\right] +\dfrac{\left( a+b-2x\right) }{%
\left( b-a\right) ^{3}}\left[ f(x)+f(a+b-x)\right] \\
&& \\
&&+\dfrac{2}{\left( b-a\right) ^{2}}\dint_{\frac{b-x}{b-a}}^{\frac{x-a}{b-a}%
}f(ta+(1-t)b)dt.
\end{eqnarray*}%
and%
\begin{eqnarray*}
I_{3} &=&\dint_{\frac{x-a}{b-a}}^{1}\left( t-1\right) ^{2}f^{\prime \prime
}(ta+(1-t)b)dt \\
&& \\
&=&\dfrac{\left( b-x\right) ^{2}}{\left( b-a\right) ^{3}}f^{\prime }(a+b-x)-%
\dfrac{2\left( b-x\right) }{\left( b-a\right) ^{3}}f(a+b-x)+\dfrac{2}{\left(
b-a\right) ^{2}}\dint_{\frac{x-a}{b-a}}^{1}f(ta+(1-t)b)dt.
\end{eqnarray*}%
Thus, we can write%
\begin{eqnarray*}
I &=&I_{1}+I_{2}+I_{3} \\
&& \\
&=&\frac{1}{(b-a)^{2}}(x-\frac{a+3b}{4})\left[ f^{\prime }(x)-f^{\prime
}(a+b-x)\right] -\frac{1}{(b-a)^{2}}\left[ f(x)+f(a+b-x)\right] \\
&& \\
&&+\frac{2}{(b-a)^{2}}\dint_{0}^{1}f(ta+(1-t)b)dt.
\end{eqnarray*}%
Using the change of the variable $u=ta+(1-t)b$ for $t\in \left[ 0,1\right] $
and by multiplying the both sides by $\left( b-a\right) ^{2}/2$ which gives
the required identity (\ref{2}).
\end{proof}

Now, by using the above lemma, we prove our main theorems:

\begin{theorem}
\label{thm1} Let $f:I\subset \mathbb{R}\rightarrow \mathbb{R}$ be a twice
differentiable function on $I^{\circ }$ such that $f^{\prime \prime }\in
L_{1}[a,b]$ where $a,b\in I,$ $a<b$. If $\left\vert f^{\prime \prime
}\right\vert $ is convex on $[a,b],$ then the following inequality holds:%
\begin{eqnarray}
&&\left\vert \dfrac{1}{b-a}\dint_{a}^{b}f(u)du-\frac{1}{2}\left[
f(x)+f(a+b-x)\right] +\frac{1}{2}(x-\frac{a+3b}{4})\left[ f^{\prime
}(x)-f^{\prime }(a+b-x)\right] \right\vert  \notag \\
&&  \label{4} \\
&\leq &\dfrac{1}{\left( b-a\right) }\left[ \left( b-x\right) ^{3}+\left( x-%
\frac{a+b}{2}\right) ^{3}\right] \left( \frac{\left\vert f^{\prime \prime
}(a)\right\vert +\left\vert f^{\prime \prime }(b)\right\vert }{6}\right) 
\notag
\end{eqnarray}%
for any $x\in \left[ \frac{a+b}{2},b\right] .$
\end{theorem}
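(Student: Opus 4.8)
The plan is to start directly from the identity (\ref{2}) furnished by Lemma \ref{lm}. Taking absolute values on both sides and applying the triangle inequality to the integral, I obtain
\begin{equation*}
\left\vert \dfrac{1}{b-a}\dint_{a}^{b}f(u)du-\frac{1}{2}\left[ f(x)+f(a+b-x)\right] +\frac{1}{2}(x-\tfrac{a+3b}{4})\left[ f^{\prime }(x)-f^{\prime }(a+b-x)\right] \right\vert \leq \frac{(b-a)^{2}}{2}\dint_{0}^{1}\left\vert k(t)\right\vert \left\vert f^{\prime \prime }(ta+(1-t)b)\right\vert dt.
\end{equation*}
On each of the three subintervals $k(t)$ is a perfect square, hence nonnegative, so $\left\vert k(t)\right\vert =k(t)$ throughout $[0,1]$ and the absolute value on $k$ can be dropped.

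Next I would invoke the hypothesis that $\left\vert f^{\prime \prime }\right\vert $ is convex. Since $ta+(1-t)b$ is a convex combination of $a$ and $b$ for $t\in [0,1]$, convexity gives
\begin{equation*}
\left\vert f^{\prime \prime }(ta+(1-t)b)\right\vert \leq t\left\vert f^{\prime \prime }(a)\right\vert +(1-t)\left\vert f^{\prime \prime }(b)\right\vert .
\end{equation*}
Substituting this into the estimate reduces the whole problem to evaluating the two weighted integrals $\dint_{0}^{1}t\,k(t)\,dt$ and $\dint_{0}^{1}(1-t)\,k(t)\,dt$.

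The decisive step would be to exploit the symmetry of $k$ about $t=\tfrac{1}{2}$. Writing $\alpha =\tfrac{b-x}{b-a}$ and $\beta =\tfrac{x-a}{b-a}$, one checks that $\alpha +\beta =1$ and $\beta -\tfrac{1}{2}=-(\alpha -\tfrac{1}{2})$, so the change of variable $t\mapsto 1-t$ carries each branch of $k$ onto another while preserving its value, i.e. $k(1-t)=k(t)$. Consequently $\dint_{0}^{1}t\,k(t)\,dt=\dint_{0}^{1}(1-t)\,k(t)\,dt=\tfrac{1}{2}\dint_{0}^{1}k(t)\,dt$, and the weighted combination collapses to $\tfrac{1}{2}\big(\left\vert f^{\prime \prime }(a)\right\vert +\left\vert f^{\prime \prime }(b)\right\vert \big)\dint_{0}^{1}k(t)\,dt$. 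This symmetry remark is precisely what spares me from computing the $t$- and $(1-t)$-weighted integrals separately.

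Finally I would compute $\dint_{0}^{1}k(t)\,dt$ explicitly by splitting at $\alpha $ and $\beta $. The first piece contributes $\tfrac{\alpha ^{3}}{3}$, the third contributes $\tfrac{\alpha ^{3}}{3}$ as well (using $1-\beta =\alpha $), and the middle piece, after the substitution $u=t-\tfrac{1}{2}$ over the symmetric interval $[-\gamma ,\gamma ]$ with $\gamma =\tfrac{x-(a+b)/2}{b-a}$, contributes $\tfrac{2}{3}\gamma ^{3}$. Summing gives $\dint_{0}^{1}k(t)\,dt=\tfrac{2}{3(b-a)^{3}}\big[(b-x)^{3}+(x-\tfrac{a+b}{2})^{3}\big]$; substituting back and cancelling the powers of $(b-a)$ against the factor $\tfrac{(b-a)^{2}}{2}$ produces exactly the right-hand side of (\ref{4}). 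The only genuine obstacle is the bookkeeping in these piecewise integrals, and the symmetry observation is what keeps it routine.
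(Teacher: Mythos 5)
Your proposal is correct, and its overall skeleton (Lemma \ref{lm}, triangle inequality, convexity of $\left\vert f^{\prime\prime}\right\vert$) matches the paper's; but the computational core is genuinely different. The paper splits the bound into three pieces $J_{1},J_{2},J_{3}$ and evaluates each weighted integral $\int t\,k(t)\,dt$ and $\int (1-t)\,k(t)\,dt$ branch by branch, obtaining expressions such as $\frac{(b-x)^{4}}{4(b-a)^{4}}\left\vert f^{\prime\prime}(a)\right\vert +\bigl(\frac{(b-x)^{3}}{3(b-a)^{3}}-\frac{(b-x)^{4}}{4(b-a)^{4}}\bigr)\left\vert f^{\prime\prime}(b)\right\vert$, whose fourth-power terms only cancel upon summation. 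You instead observe that $k(1-t)=k(t)$ (which holds for all $t$ except possibly the two branch points $t=\frac{b-x}{b-a}$ and $t=\frac{x-a}{b-a}$, a set of measure zero, so the integral identities are unaffected), whence $\int_{0}^{1}t\,k(t)\,dt=\int_{0}^{1}(1-t)\,k(t)\,dt=\frac{1}{2}\int_{0}^{1}k(t)\,dt$, and only the single unweighted integral $\int_{0}^{1}k(t)\,dt=\frac{2}{3(b-a)^{3}}\bigl[(b-x)^{3}+(x-\frac{a+b}{2})^{3}\bigr]$ needs to be computed; this is exactly the quantity the paper computes separately (as (\ref{E2})) in the proof of Theorem \ref{thm3}. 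Your route is shorter, avoids the cancellation bookkeeping entirely, and makes structurally transparent why $\left\vert f^{\prime\prime}(a)\right\vert$ and $\left\vert f^{\prime\prime}(b)\right\vert$ enter the final bound with equal coefficients; what the paper's brute-force computation buys in exchange is the explicit intermediate estimates for $J_{1},J_{2},J_{3}$, which it then reuses verbatim in the proof of Theorem \ref{thm3} (equation (\ref{E3})), where the power-mean argument genuinely requires the $k$-weighted moments rather than the plain integral.
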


\begin{proof}
From Lemma $\ref{lm}$ and by the definition $k(t),$ we get%
\begin{eqnarray}
&&\left\vert \dfrac{1}{b-a}\dint_{a}^{b}f(u)du-\frac{1}{2}\left[
f(x)+f(a+b-x)\right] +\frac{1}{2}(x-\frac{a+3b}{4})\left[ f^{\prime
}(x)-f^{\prime }(a+b-x)\right] \right\vert  \notag \\
&&  \notag \\
&\leq &\frac{\left( b-a\right) ^{2}}{2}\dint_{0}^{1}\left\vert k\left(
t\right) \right\vert \left\vert f^{\prime \prime }(ta+(1-t)b)\right\vert dt 
\notag \\
&&  \notag \\
&=&\frac{\left( b-a\right) ^{2}}{2}\left\{ \dint_{0}^{\frac{b-x}{b-a}%
}t^{2}\left\vert f^{\prime \prime }(ta+(1-t)b)\right\vert dt+\dint_{\frac{b-x%
}{b-a}}^{\frac{x-a}{b-a}}\left( t-\frac{1}{2}\right) ^{2}\left\vert
f^{\prime \prime }(ta+(1-t)b)\right\vert dt\right.  \notag \\
&&  \label{5} \\
&&\left. +\dint_{\frac{x-a}{b-a}}^{1}\left( t-1\right) ^{2}\left\vert
f^{\prime \prime }(ta+(1-t)b)\right\vert dt\right\}  \notag \\
&&  \notag \\
&=&\frac{\left( b-a\right) ^{2}}{2}\left\{ J_{1}+J_{2}+J_{3}\right\} . 
\notag
\end{eqnarray}%
Investigating the three separate integrals, we may evaluate as follows:

By the convexity of $\left\vert f^{\prime \prime }\right\vert $, we arrive at%
\begin{eqnarray*}
J_{1} &\leq &\dint_{0}^{\frac{b-x}{b-a}}\left( t^{3}\left\vert f^{\prime
\prime }(a)\right\vert +(t^{2}-t^{3})\left\vert f^{\prime \prime
}(b)\right\vert \right) dt \\
&& \\
&=&\frac{\left( b-x\right) ^{4}}{4(b-a)^{4}}\left\vert f^{\prime \prime
}(a)\right\vert +\left( \frac{\left( b-x\right) ^{3}}{3(b-a)^{3}}-\frac{%
\left( b-x\right) ^{4}}{4(b-a)^{4}}\right) \left\vert f^{\prime \prime
}(b)\right\vert ,
\end{eqnarray*}%
\begin{eqnarray*}
J_{2} &\leq &\dint_{\frac{b-x}{b-a}}^{\frac{x-a}{b-a}}\left[ \left( t-\frac{1%
}{2}\right) ^{2}t\left\vert f^{\prime \prime }(a)\right\vert +\left( t-\frac{%
1}{2}\right) ^{2}(1-t)\left\vert f^{\prime \prime }(b)\right\vert \right] dt
\\
&& \\
&=&\frac{1}{3(b-a)^{3}}\left( x-\frac{a+b}{2}\right) ^{3}\left\vert
f^{\prime \prime }(a)\right\vert +\frac{1}{3(b-a)^{3}}\left( x-\frac{a+b}{2}%
\right) ^{3}\left\vert f^{\prime \prime }(b)\right\vert ,
\end{eqnarray*}%
\begin{eqnarray*}
J_{3} &\leq &\dint_{\frac{x-a}{b-a}}^{1}\left[ \left( t-1\right)
^{2}t\left\vert f^{\prime \prime }(a)\right\vert +(1-t)^{3}\left\vert
f^{\prime \prime }(b)\right\vert \right] dt \\
&& \\
&=&\left( \frac{\left( b-x\right) ^{3}}{3(b-a)^{3}}-\frac{\left( b-x\right)
^{4}}{4(b-a)^{4}}\right) \left\vert f^{\prime \prime }(a)\right\vert +\frac{%
\left( b-x\right) ^{4}}{4(b-a)^{4}}\left\vert f^{\prime \prime
}(b)\right\vert .
\end{eqnarray*}%
By rewrite $J_{1},J_{2},J_{3}$ in (\ref{5}), we obtain (\ref{4}) which
completes the proof.
\end{proof}

\begin{corollary}[Perturbed Trapezoid inequality]
\label{c1} Under the assumptions Theorem $\ref{thm1}$ with $x=b,$ we have%
\begin{equation*}
\left\vert \int\limits_{a}^{b}f(u)du-\frac{b-a}{2}\left[ f(b)+f(a)\right] +%
\frac{\left( b-a\right) ^{2}}{8}\left[ f^{\prime }(b)-f^{\prime }(a)\right]
\right\vert \leq \dfrac{(b-a)^{3}}{48}(\left\vert f^{\prime \prime }\left(
a\right) \right\vert +\left\vert f^{\prime \prime }\left( b\right)
\right\vert ).
\end{equation*}
\end{corollary}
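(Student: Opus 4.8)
The plan is to specialize Theorem \ref{thm1} to the endpoint $x=b$, which lies in the admissible range $\left[\frac{a+b}{2},b\right]$, and then rescale by the factor $(b-a)$. First I would substitute $x=b$ throughout inequality (\ref{4}). This forces $a+b-x=a$, so the sampled function and derivative terms collapse from $f(x),f(a+b-x)$ and $f^{\prime}(x),f^{\prime}(a+b-x)$ to $f(b),f(a)$ and $f^{\prime}(b),f^{\prime}(a)$ respectively. I would then record the two shifts that appear: $x-\frac{a+3b}{4}=b-\frac{a+3b}{4}=\frac{b-a}{4}$ and $x-\frac{a+b}{2}=\frac{b-a}{2}$, while observing that $b-x=0$.

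With these values in hand, the left-hand side of (\ref{4}) becomes $\left|\frac{1}{b-a}\int_a^b f(u)\,du-\frac12[f(b)+f(a)]+\frac{b-a}{8}[f^{\prime}(b)-f^{\prime}(a)]\right|$, where I have used $\frac12\cdot\frac{b-a}{4}=\frac{b-a}{8}$ on the derivative-difference term. For the right-hand side I would insert $(b-x)^{3}=0$ and $\left(x-\frac{a+b}{2}\right)^{3}=\frac{(b-a)^{3}}{8}$ into the bracket, so the bound simplifies to $\frac{1}{b-a}\cdot\frac{(b-a)^{3}}{8}\cdot\frac{|f^{\prime\prime}(a)|+|f^{\prime\prime}(b)|}{6}=\frac{(b-a)^{2}}{48}\left(|f^{\prime\prime}(a)|+|f^{\prime\prime}(b)|\right)$.

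Finally I would multiply both sides of the resulting inequality by $(b-a)$. On the left this clears the prefactor $\frac{1}{b-a}$ in front of the integral and scales the remaining terms, producing exactly $\left|\int_a^b f(u)\,du-\frac{b-a}{2}[f(b)+f(a)]+\frac{(b-a)^{2}}{8}[f^{\prime}(b)-f^{\prime}(a)]\right|$; on the right it promotes $\frac{(b-a)^{2}}{48}$ to $\frac{(b-a)^{3}}{48}$, giving the claimed estimate. Since this is a direct specialization of an already-proved theorem, there is no genuine obstacle; the only point requiring care is the bookkeeping of constants—specifically tracking the factor $\tfrac12$ multiplying the derivative-difference term and confirming that the vanishing of $b-x$ eliminates one of the two cubic contributions to the bound.
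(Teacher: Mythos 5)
Your proposal is correct and is exactly the argument the paper intends (the corollary is stated without proof as a direct specialization of Theorem \ref{thm1}): set $x=b$, note $x-\frac{a+3b}{4}=\frac{b-a}{4}$, $b-x=0$, $\left(x-\frac{a+b}{2}\right)^{3}=\frac{(b-a)^{3}}{8}$, and multiply through by $b-a$. All constants check out, so there is nothing to add.
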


\begin{remark}
We choose $\left\vert f^{\prime \prime }(x)\right\vert \leq M,\ \ M>0$ in
Corollary \ref{c1}, then we recapture the first part of the inequality (\ref%
{01}).
\end{remark}

\begin{corollary}[Trapezoid inequality]
Under the assumptions Theorem $\ref{thm1}$ with $x=b$ and $f^{\prime
}(a)=f^{\prime }(b)$ in Theorem $\ref{thm1},$ we have 
\begin{equation}
\left\vert \dfrac{1}{b-a}\dint_{a}^{b}f(u)du-\dfrac{f\left( a\right)
+f\left( b\right) }{2}\right\vert \leq \dfrac{(b-a)^{2}}{48}(\left\vert
f^{\prime \prime }\left( a\right) \right\vert +\left\vert f^{\prime \prime
}\left( b\right) \right\vert ).  \label{H9}
\end{equation}
\end{corollary}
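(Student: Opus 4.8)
The plan is to obtain this bound as an immediate specialization of Theorem~$\ref{thm1}$, taking the endpoint value $x=b$ and then invoking the hypothesis $f^{\prime }(a)=f^{\prime }(b)$ to annihilate the perturbation term. First I would substitute $x=b$ into the master inequality (\ref{4}). Since $a+b-x=a$ in this case, the bracketed function values collapse to $f(x)+f(a+b-x)=f(b)+f(a)$ and the derivative differences to $f^{\prime }(x)-f^{\prime }(a+b-x)=f^{\prime }(b)-f^{\prime }(a)$, so the left-hand side of (\ref{4}) becomes
\[
\left\vert \dfrac{1}{b-a}\int_{a}^{b}f(u)\,du-\frac{f(a)+f(b)}{2}+\frac{1}{2}\left(b-\frac{a+3b}{4}\right)\left[f^{\prime }(b)-f^{\prime }(a)\right]\right\vert .
\]

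Next I would simplify the coefficient of the perturbation, noting that $b-\frac{a+3b}{4}=\frac{b-a}{4}$, so that this term equals $\frac{b-a}{8}\left[f^{\prime }(b)-f^{\prime }(a)\right]$. Under the standing assumption $f^{\prime }(a)=f^{\prime }(b)$ this contribution vanishes identically, leaving precisely the trapezoidal functional $\left\vert \frac{1}{b-a}\int_{a}^{b}f(u)\,du-\frac{f(a)+f(b)}{2}\right\vert $ that appears on the left of (\ref{H9}).

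It remains to reduce the right-hand side of (\ref{4}) at $x=b$. Here $(b-x)^{3}=0$, while $\left(x-\frac{a+b}{2}\right)^{3}=\left(\frac{b-a}{2}\right)^{3}=\frac{(b-a)^{3}}{8}$, so the bound becomes
\[
\frac{1}{b-a}\cdot\frac{(b-a)^{3}}{8}\cdot\frac{\left\vert f^{\prime \prime }(a)\right\vert +\left\vert f^{\prime \prime }(b)\right\vert }{6}=\frac{(b-a)^{2}}{48}\bigl(\left\vert f^{\prime \prime }(a)\right\vert +\left\vert f^{\prime \prime }(b)\right\vert \bigr),
\]
which is exactly the claimed constant. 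I do not anticipate a genuine obstacle: the argument is a pure specialization, and the only point requiring care is the bookkeeping that the perturbation coefficient is exactly $\frac{b-a}{8}$ (so that it really does cancel under $f^{\prime }(a)=f^{\prime }(b)$) and that the two cubic contributions on the right combine to give $\frac{(b-a)^{3}}{8}$ before the prefactor $\frac{1}{b-a}$ is applied.
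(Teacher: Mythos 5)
Your proof is correct and is exactly the specialization the paper intends: setting $x=b$ in Theorem \ref{thm1} (note $x=b$ lies in $\left[\frac{a+b}{2},b\right]$, so this is legitimate), observing that the perturbation coefficient is $\frac{b-a}{8}$ and vanishes when $f^{\prime}(a)=f^{\prime}(b)$, and evaluating the right-hand side to $\frac{(b-a)^{2}}{48}\left(\left\vert f^{\prime\prime}(a)\right\vert+\left\vert f^{\prime\prime}(b)\right\vert\right)$. All the bookkeeping checks out, so nothing further is needed.
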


\begin{corollary}[Midpoint inequality]
Under the assumptions Theorem $\ref{thm1}$ with $x=\frac{a+b}{2}$ in Theorem 
$\ref{thm1},$ we have 
\begin{equation}
\left\vert \dfrac{1}{b-a}\dint_{a}^{b}f(u)du-f\left( \frac{a+b}{2}\right)
\right\vert \leq \dfrac{(b-a)^{2}}{48}(\left\vert f^{\prime \prime }\left(
a\right) \right\vert +\left\vert f^{\prime \prime }\left( b\right)
\right\vert ).  \label{H10}
\end{equation}
\end{corollary}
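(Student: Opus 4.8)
The plan is to obtain this inequality as a direct specialization of Theorem \ref{thm1}, taking $x=\frac{a+b}{2}$. Since $\frac{a+b}{2}$ is precisely the left endpoint of the admissible interval $\left[\frac{a+b}{2},b\right]$, the hypothesis $x\in\left[\frac{a+b}{2},b\right]$ of Theorem \ref{thm1} is met, so no new estimate is required: everything reduces to substituting this value into (\ref{4}) and simplifying both sides.

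First I would evaluate the left-hand side. With $x=\frac{a+b}{2}$ one has $a+b-x=\frac{a+b}{2}=x$, so both $f(x)$ and $f(a+b-x)$ equal $f\left(\frac{a+b}{2}\right)$, and the averaged term $\frac{1}{2}\left[f(x)+f(a+b-x)\right]$ collapses to $f\left(\frac{a+b}{2}\right)$. Moreover $f^{\prime}(x)-f^{\prime}(a+b-x)=0$, so the perturbation term $\frac{1}{2}(x-\frac{a+3b}{4})\left[f^{\prime}(x)-f^{\prime}(a+b-x)\right]$ vanishes identically. Hence the left-hand side of (\ref{4}) becomes exactly $\left|\frac{1}{b-a}\int_{a}^{b}f(u)\,du-f\left(\frac{a+b}{2}\right)\right|$, the midpoint quantity appearing in (\ref{H10}).

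Next I would simplify the right-hand side. Substituting $x=\frac{a+b}{2}$ gives $b-x=\frac{b-a}{2}$ and $x-\frac{a+b}{2}=0$, so the bracket $\left[(b-x)^{3}+(x-\frac{a+b}{2})^{3}\right]$ reduces to $\frac{(b-a)^{3}}{8}$. Combining this with the prefactor $\frac{1}{b-a}$ and the factor $\frac{1}{6}$ yields $\frac{(b-a)^{2}}{8}\cdot\frac{1}{6}=\frac{(b-a)^{2}}{48}$, so the bound in (\ref{4}) becomes $\frac{(b-a)^{2}}{48}\left(\left|f^{\prime\prime}(a)\right|+\left|f^{\prime\prime}(b)\right|\right)$, which is exactly the right-hand side of (\ref{H10}).

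Since the argument is a mere evaluation of a previously established inequality at an endpoint of its range, there is no genuine obstacle. The only point demanding care is confirming that $x=\frac{a+b}{2}$ lies in the admissible interval and that the two cancellations on the left-hand side, namely the merging of the function-value terms and the vanishing of the derivative-difference term, occur simultaneously, which they clearly do.
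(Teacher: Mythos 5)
Your proof is correct and follows exactly the route the paper intends: the corollary is obtained by direct substitution of $x=\frac{a+b}{2}$ into inequality (\ref{4}) of Theorem \ref{thm1}, with the function-value terms merging, the derivative-difference term vanishing, and the right-hand side reducing to $\frac{1}{b-a}\cdot\frac{(b-a)^{3}}{8}\cdot\frac{\left\vert f^{\prime\prime}(a)\right\vert+\left\vert f^{\prime\prime}(b)\right\vert}{6}=\frac{(b-a)^{2}}{48}\left(\left\vert f^{\prime\prime}(a)\right\vert+\left\vert f^{\prime\prime}(b)\right\vert\right)$. The paper gives no separate proof precisely because this is the whole argument, and your computations check out.
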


Another similar result may be extended in the following theorem

\begin{theorem}
\label{thm2} Let $f:I\subset \mathbb{R}\rightarrow \mathbb{R}$ be a twice
differentiable function on $I^{\circ }$ such that $f^{\prime \prime }\in
L_{1}[a,b]$ where $a,b\in I,$ $a<b$. If $\left\vert f^{\prime \prime
}\right\vert ^{q}$ is convex on $[a,b],$\ $q>1$ and $\frac{1}{p}+\frac{1}{q}%
=1,$ then%
\begin{eqnarray}
&&\left\vert \dfrac{1}{b-a}\dint_{a}^{b}f(u)du-\frac{1}{2}\left[
f(x)+f(a+b-x)\right] +\frac{1}{2}(x-\frac{a+3b}{4})\left[ f^{\prime
}(x)-f^{\prime }(a+b-x)\right] \right\vert  \notag \\
&&  \label{6} \\
&\leq &\frac{2^{\frac{1}{p}-1}}{\left( 2p+1\right) ^{\frac{1}{p}}\left(
b-a\right) ^{\frac{1}{p}}}\left[ \left( b-x\right) ^{2p+1}+\left( x-\frac{a+b%
}{2}\right) ^{2p+1}\right] ^{\frac{1}{p}}\left( \frac{\left\vert f^{\prime
\prime }(a)\right\vert ^{q}+\left\vert f^{\prime \prime }(b)\right\vert ^{q}%
}{2}\right) ^{\frac{1}{q}}  \notag
\end{eqnarray}%
for any $x\in \left[ \frac{a+b}{2},b\right] .$
\end{theorem}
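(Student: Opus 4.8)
The plan is to follow the proof of Theorem~\ref{thm1} up to the point where the modulus has been brought inside the integral, but then to replace the branch-by-branch convexity estimate with a \emph{single} application of H\"older's inequality over all of $[0,1]$. Starting from identity (\ref{2}) of Lemma~\ref{lm}, I would take absolute values and use the triangle inequality to obtain
\begin{equation*}
\left\vert \dfrac{1}{b-a}\dint_{a}^{b}f(u)du-\tfrac{1}{2}\left[ f(x)+f(a+b-x)\right] +\tfrac{1}{2}(x-\tfrac{a+3b}{4})\left[ f^{\prime }(x)-f^{\prime }(a+b-x)\right] \right\vert \leq \frac{(b-a)^{2}}{2}\dint_{0}^{1}\left\vert k(t)\right\vert \left\vert f^{\prime \prime }(ta+(1-t)b)\right\vert dt .
\end{equation*}
To this one integral I would apply H\"older's inequality with exponents $p$ and $q$, factoring it as $\bigl(\int_{0}^{1}\left\vert k(t)\right\vert^{p}dt\bigr)^{1/p}\bigl(\int_{0}^{1}\left\vert f^{\prime \prime }(ta+(1-t)b)\right\vert^{q}dt\bigr)^{1/q}$.

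The second factor is immediate from the hypothesis: since $\left\vert f^{\prime \prime }\right\vert^{q}$ is convex, $\left\vert f^{\prime \prime }(ta+(1-t)b)\right\vert^{q}\leq t\left\vert f^{\prime \prime }(a)\right\vert^{q}+(1-t)\left\vert f^{\prime \prime }(b)\right\vert^{q}$, and integrating over $[0,1]$ gives $\int_{0}^{1}\left\vert f^{\prime \prime }(ta+(1-t)b)\right\vert^{q}dt\leq \tfrac{1}{2}\bigl(\left\vert f^{\prime \prime }(a)\right\vert^{q}+\left\vert f^{\prime \prime }(b)\right\vert^{q}\bigr)$, which is exactly the last factor appearing in (\ref{6}).

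The computational core is the first factor $\int_{0}^{1}\left\vert k(t)\right\vert^{p}dt$, which I would evaluate by splitting along the three branches of $k$. On $[0,\tfrac{b-x}{b-a})$ one has $\int t^{2p}dt=\tfrac{(b-x)^{2p+1}}{(2p+1)(b-a)^{2p+1}}$, and on $[\tfrac{x-a}{b-a},1]$ the substitution $t\mapsto 1-t$ shows $\int (1-t)^{2p}dt$ yields the identical value. For the middle branch I would observe that the interval $[\tfrac{b-x}{b-a},\tfrac{x-a}{b-a}]$ is symmetric about $t=\tfrac{1}{2}$ with half-length $\tfrac{x-(a+b)/2}{b-a}$, so that $\int (t-\tfrac12)^{2p}dt=\tfrac{2\,(x-(a+b)/2)^{2p+1}}{(2p+1)(b-a)^{2p+1}}$. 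Summing the three contributions collapses them into $\int_{0}^{1}\left\vert k(t)\right\vert^{p}dt=\tfrac{2}{(2p+1)(b-a)^{2p+1}}\bigl[(b-x)^{2p+1}+(x-\tfrac{a+b}{2})^{2p+1}\bigr]$.

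Finally I would assemble the three factors together with the prefactor $\tfrac{(b-a)^{2}}{2}$: the constants combine as $\tfrac{1}{2}\cdot 2^{1/p}=2^{1/p-1}$, the factor $(2p+1)^{-1/p}$ is carried along, and the powers of $(b-a)$ collapse since $(b-a)^{2}\cdot(b-a)^{-(2p+1)/p}=(b-a)^{-1/p}$, reproducing precisely the right-hand side of (\ref{6}). I expect the only delicate points to be the symmetry bookkeeping in the middle integral and the exponent arithmetic at the end; the decisive choice is to apply H\"older once over the full interval rather than separately on each branch, since the three separate $\tfrac1p$-powers would not recombine into the single bracket $\bigl[(b-x)^{2p+1}+(x-\tfrac{a+b}{2})^{2p+1}\bigr]^{1/p}$.
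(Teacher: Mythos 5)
Your proposal is correct and coincides with the paper's own proof: the paper likewise applies H\"older's inequality once over all of $[0,1]$ (its equation (\ref{7})), bounds $\int_{0}^{1}\left\vert f^{\prime\prime}(ta+(1-t)b)\right\vert^{q}dt$ by $\tfrac{1}{2}\left(\left\vert f^{\prime\prime}(a)\right\vert^{q}+\left\vert f^{\prime\prime}(b)\right\vert^{q}\right)$ via convexity (its (\ref{8})), and evaluates $\int_{0}^{1}\left\vert k(t)\right\vert^{p}dt$ branch by branch to get $\tfrac{2}{(2p+1)(b-a)^{2p+1}}\left[(b-x)^{2p+1}+\left(x-\tfrac{a+b}{2}\right)^{2p+1}\right]$ (its (\ref{9})). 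Your symmetry argument for the middle branch and the final exponent bookkeeping are both accurate, so nothing is missing.
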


\begin{proof}
From Lemma $\ref{lm}$, by the definition $k\left( t\right) $ and using by H%
\"{o}lder's inequality, it follows that%
\begin{eqnarray}
&&\left\vert \dfrac{1}{b-a}\dint_{a}^{b}f(u)du-\frac{1}{2}\left[
f(x)+f(a+b-x)\right] +\frac{1}{2}(x-\frac{a+3b}{4})\left[ f^{\prime
}(x)-f^{\prime }(a+b-x)\right] \right\vert  \notag \\
&&  \label{7} \\
&\leq &\frac{\left( b-a\right) ^{2}}{2}\dint_{0}^{1}\left\vert k\left(
t\right) \right\vert \left\vert f^{\prime \prime }(ta+(1-t)b)\right\vert dt 
\notag \\
&&  \notag \\
&\leq &\frac{\left( b-a\right) ^{2}}{2}\left( \dint_{0}^{1}\left\vert
k\left( t\right) \right\vert ^{p}dt\right) ^{\frac{1}{p}}\left(
\dint_{0}^{1}\left\vert f^{\prime \prime }(ta+(1-t)b)\right\vert
^{q}dt\right) ^{\frac{1}{q}}.  \notag
\end{eqnarray}%
Since $\left\vert f^{\prime \prime }\right\vert ^{q}$ is convex on $\left[
a,b\right] ,$ we know that for $t\in \lbrack 0,1]$%
\begin{equation*}
\left\vert f^{\prime \prime }(ta+(1-t)b)\right\vert ^{q}\leq t\left\vert
f^{\prime \prime }(a)\right\vert ^{q}+(1-t)\left\vert f^{\prime \prime
}(b)\right\vert ^{q},
\end{equation*}%
hence, a simple computation shows that%
\begin{equation}
\dint_{0}^{1}\left\vert f^{\prime \prime }(ta+(1-t)b)\right\vert ^{q}dt\leq 
\frac{\left\vert f^{\prime \prime }(a)\right\vert ^{q}+\left\vert f^{\prime
\prime }(b)\right\vert ^{q}}{2}  \label{8}
\end{equation}%
also,%
\begin{eqnarray}
\dint_{0}^{1}\left\vert k\left( t\right) \right\vert ^{p}dt &=&\dint_{0}^{%
\frac{b-x}{b-a}}t^{2p}dt+\dint_{\frac{b-x}{b-a}}^{\frac{x-a}{b-a}}\left\vert
t-\frac{1}{2}\right\vert ^{2p}dt+\dint_{\frac{x-a}{b-a}}^{1}\left(
1-t\right) ^{2p}dt  \notag \\
&&  \label{9} \\
&=&\frac{2}{\left( 2p+1\right) \left( b-a\right) ^{2p+1}}\left[ \left(
b-x\right) ^{2p+1}+\left( x-\frac{a+b}{2}\right) ^{2p+1}\right] .  \notag
\end{eqnarray}%
Using (\ref{8}) and (\ref{9}) in (\ref{7}), we obtain (\ref{6}).
\end{proof}

\begin{corollary}[Perturbed Trapezoid inequality]
\label{c2} Under the assumptions Theorem $\ref{thm2}$ with $x=b,$ we have%
\begin{equation*}
\left\vert \int\limits_{a}^{b}f(u)du-\frac{b-a}{2}\left[ f(b)+f(a)\right] +%
\frac{\left( b-a\right) ^{2}}{8}\left[ f^{\prime }(b)-f^{\prime }(a)\right]
\right\vert \leq \frac{(b-a)^{3}}{8\left( 2p+1\right) ^{\frac{1}{p}}}\left( 
\frac{\left\vert f^{\prime \prime }(a)\right\vert ^{q}+\left\vert f^{\prime
\prime }(b)\right\vert ^{q}}{2}\right) ^{\frac{1}{q}}.
\end{equation*}
\end{corollary}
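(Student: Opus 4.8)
The plan is to obtain this corollary directly by specializing Theorem \ref{thm2} to the endpoint $x=b$ and then clearing the factor $\tfrac{1}{b-a}$. First I would substitute $x=b$ into the left-hand side of \eqref{6}. This gives $a+b-x=a$, so that $f(x)+f(a+b-x)=f(b)+f(a)$ and $f'(x)-f'(a+b-x)=f'(b)-f'(a)$; moreover $x-\tfrac{a+3b}{4}=b-\tfrac{a+3b}{4}=\tfrac{b-a}{4}$, whence the perturbation term becomes $\tfrac12\cdot\tfrac{b-a}{4}\,[f'(b)-f'(a)]=\tfrac{b-a}{8}[f'(b)-f'(a)]$. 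Multiplying the whole left-hand side by $b-a$ then reproduces exactly the expression $\left|\int_a^b f(u)\,du-\tfrac{b-a}{2}[f(b)+f(a)]+\tfrac{(b-a)^2}{8}[f'(b)-f'(a)]\right|$ appearing in the statement.

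Next I would evaluate the right-hand side of \eqref{6} at $x=b$. Here $(b-x)^{2p+1}=0$ and $\bigl(x-\tfrac{a+b}{2}\bigr)^{2p+1}=\bigl(\tfrac{b-a}{2}\bigr)^{2p+1}=\tfrac{(b-a)^{2p+1}}{2^{2p+1}}$, so the bracketed factor raised to the power $1/p$ collapses to $\tfrac{(b-a)^{2+1/p}}{2^{2+1/p}}$. The main bookkeeping step is to combine this with the prefactor $\tfrac{2^{1/p-1}}{(2p+1)^{1/p}(b-a)^{1/p}}$: the powers of $2$ accumulate to $2^{(1/p-1)-(2+1/p)}=2^{-3}=\tfrac18$ and the powers of $b-a$ to $(b-a)^{(2+1/p)-1/p}=(b-a)^2$, leaving the quantity $\tfrac{1}{8(2p+1)^{1/p}}(b-a)^2\bigl(\tfrac{|f''(a)|^q+|f''(b)|^q}{2}\bigr)^{1/q}$.

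Finally I would multiply both sides of the specialized inequality by $b-a$, which turns the bound into $\tfrac{(b-a)^3}{8(2p+1)^{1/p}}\bigl(\tfrac{|f''(a)|^q+|f''(b)|^q}{2}\bigr)^{1/q}$ and yields the claimed inequality. There is no genuine obstacle here, since the argument is a pure specialization of Theorem \ref{thm2}; the only point requiring care is the exponent arithmetic in matching $2^{1/p-1}/2^{2+1/p}$ and $(b-a)^{2+1/p}/(b-a)^{1/p}$, and everything else follows verbatim from the theorem.
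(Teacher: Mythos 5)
Your proposal is correct and coincides with the paper's intent: the corollary is stated there without a separate proof precisely because it is the immediate specialization $x=b$ of Theorem \ref{thm2}, which is exactly what you carry out. Your exponent bookkeeping ($2^{(1/p-1)-(2+1/p)}=2^{-3}$ and $(b-a)^{(2+1/p)-1/p}=(b-a)^2$) and the final multiplication by $b-a$ are all accurate.
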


\begin{corollary}[Trapezoid inequality]
Under the assumptions Theorem $\ref{thm2}$ with $x=b$ and $f^{\prime
}(a)=f^{\prime }(b)$ in Theorem $\ref{thm2},$ we have 
\begin{equation}
\left\vert \dfrac{1}{b-a}\dint_{a}^{b}f(u)du-\dfrac{f\left( a\right)
+f\left( b\right) }{2}\right\vert \leq \frac{(b-a)^{2}}{8\left( 2p+1\right)
^{\frac{1}{p}}}\left( \frac{\left\vert f^{\prime \prime }(a)\right\vert
^{q}+\left\vert f^{\prime \prime }(b)\right\vert ^{q}}{2}\right) ^{\frac{1}{q%
}}.  \label{H12}
\end{equation}
\end{corollary}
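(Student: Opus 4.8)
The plan is to obtain (\ref{H12}) as a direct specialization of Theorem~\ref{thm2}, setting $x=b$ in the inequality (\ref{6}) and then imposing the extra hypothesis $f^{\prime}(a)=f^{\prime}(b)$. No new estimate is required: the whole argument is a substitution together with the bookkeeping of the exponents on the right-hand side. I would take the normalized form (\ref{6}) itself as the starting point rather than the perturbed bound of Corollary~\ref{c2}, because the left-hand side of (\ref{6}) already carries the factor $\frac{1}{b-a}$ appearing in (\ref{H12}).

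First I would record the values of each quantity in (\ref{6}) when $x=b$. One has $a+b-x=a$, so that $f(x)+f(a+b-x)=f(b)+f(a)$ and $f^{\prime}(x)-f^{\prime}(a+b-x)=f^{\prime}(b)-f^{\prime}(a)$, while $x-\frac{a+3b}{4}=\frac{b-a}{4}$. Hence the expression inside the absolute value on the left of (\ref{6}) becomes
\[
\frac{1}{b-a}\int_{a}^{b}f(u)\,du-\frac{1}{2}\left[f(b)+f(a)\right]+\frac{b-a}{8}\left[f^{\prime}(b)-f^{\prime}(a)\right].
\]
Imposing $f^{\prime}(a)=f^{\prime}(b)$ annihilates the last term and leaves exactly the left-hand side of (\ref{H12}).

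Next I would simplify the right-hand side of (\ref{6}) at $x=b$. Since $b-x=0$, the summand $\left(b-x\right)^{2p+1}$ drops out, while $x-\frac{a+b}{2}=\frac{b-a}{2}$, so the bracket reduces to $\bigl(\frac{b-a}{2}\bigr)^{2p+1}$ and its $p$-th root is $\bigl(\frac{b-a}{2}\bigr)^{2+\frac{1}{p}}$. The one remaining step is to consolidate the powers of $2$ and of $b-a$: combining the prefactor $2^{\frac{1}{p}-1}$ with $2^{-(2+\frac{1}{p})}$ yields $2^{-3}=\frac{1}{8}$, and $(b-a)^{-\frac{1}{p}}$ times $(b-a)^{2+\frac{1}{p}}$ yields $(b-a)^{2}$. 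The right-hand side therefore collapses to
\[
\frac{(b-a)^{2}}{8\left(2p+1\right)^{\frac{1}{p}}}\left(\frac{\left\vert f^{\prime\prime}(a)\right\vert^{q}+\left\vert f^{\prime\prime}(b)\right\vert^{q}}{2}\right)^{\frac{1}{q}},
\]
which is precisely the bound claimed in (\ref{H12}).

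The only place asking for any care is this final collection of exponents; everything else is mechanical substitution, so I do not anticipate a genuine obstacle. As an independent check on that exponent arithmetic, one can route the argument instead through the perturbed trapezoid bound of Corollary~\ref{c2}: dividing that inequality by $b-a$ and then setting $f^{\prime}(a)=f^{\prime}(b)$ reproduces (\ref{H12}) at once, confirming the powers of $b-a$ and of $2$.
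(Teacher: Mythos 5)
Your proof is correct and is exactly the argument the paper intends (the corollary is stated without an explicit proof, precisely because it is the substitution $x=b$ into (\ref{6}) followed by imposing $f^{\prime}(a)=f^{\prime}(b)$): the exponent bookkeeping $2^{\frac{1}{p}-1}\cdot 2^{-(2+\frac{1}{p})}=\frac{1}{8}$ and $(b-a)^{-\frac{1}{p}}\cdot(b-a)^{2+\frac{1}{p}}=(b-a)^{2}$ checks out, and your cross-check via Corollary~\ref{c2} is consistent. Nothing is missing.
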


\begin{corollary}[Midpoint inequality]
Under the assumptions Theorem $\ref{thm2}$ with $x=\frac{a+b}{2}$ in Theorem 
$\ref{thm2},$ we have 
\begin{equation}
\left\vert \dfrac{1}{b-a}\dint_{a}^{b}f(u)du-f\left( \frac{a+b}{2}\right)
\right\vert \leq \frac{(b-a)^{2}}{8\left( 2p+1\right) ^{\frac{1}{p}}}\left( 
\frac{\left\vert f^{\prime \prime }(a)\right\vert ^{q}+\left\vert f^{\prime
\prime }(b)\right\vert ^{q}}{2}\right) ^{\frac{1}{q}}.  \label{H13}
\end{equation}
\end{corollary}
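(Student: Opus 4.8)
The plan is to specialize the master inequality (\ref{6}) of Theorem \ref{thm2} to the symmetric node $x=\frac{a+b}{2}$ and then simplify both sides; no new analytic input is required, since the corollary is just a limiting case of the theorem already proved. First I would substitute $x=\frac{a+b}{2}$ into the left-hand side. Then $a+b-x=\frac{a+b}{2}=x$, so the two evaluations coincide: $\frac12[f(x)+f(a+b-x)]=f\!\left(\frac{a+b}{2}\right)$ and $f'(x)-f'(a+b-x)=0$, which annihilates the perturbation term entirely. Hence the left-hand side collapses to $\left|\frac{1}{b-a}\int_a^b f(u)\,du-f\!\left(\frac{a+b}{2}\right)\right|$, exactly the midpoint functional.

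Next I would simplify the right-hand side. At $x=\frac{a+b}{2}$ one has $b-x=\frac{b-a}{2}$ and $x-\frac{a+b}{2}=0$, so the bracket $[(b-x)^{2p+1}+(x-\frac{a+b}{2})^{2p+1}]^{1/p}$ reduces to $\left(\frac{b-a}{2}\right)^{(2p+1)/p}$. The only genuine bookkeeping is to combine this with the prefactor $\frac{2^{1/p-1}}{(2p+1)^{1/p}(b-a)^{1/p}}$ and verify that the exponents collapse correctly. For the powers of $(b-a)$ one gets $(b-a)^{(2p+1)/p-1/p}=(b-a)^2$, and for the powers of $2$ one gets $2^{\,1/p-1-(2p+1)/p}=2^{-3}=\frac18$, leaving the factor $(2p+1)^{-1/p}$ and the convex-combination term $\left(\frac{|f''(a)|^q+|f''(b)|^q}{2}\right)^{1/q}$ untouched.

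Putting the two reductions together yields precisely
\[
\left|\frac{1}{b-a}\int_a^b f(u)\,du-f\!\left(\frac{a+b}{2}\right)\right|\le \frac{(b-a)^2}{8\left(2p+1\right)^{1/p}}\left(\frac{|f''(a)|^q+|f''(b)|^q}{2}\right)^{1/q},
\]
which is the claimed midpoint inequality. I expect no substantive obstacle: the entire argument is an immediate consequence of the symmetry of the node $x=\frac{a+b}{2}$, and the single point requiring care is the exponent arithmetic in the second paragraph, where one must confirm that the accumulated powers of $2$ produce exactly the constant $\frac18$ and that the $(b-a)$ exponent lands on $2$ rather than $2+\frac1p$.
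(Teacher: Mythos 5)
Your proposal is correct and is exactly the paper's route: the corollary is obtained by substituting $x=\frac{a+b}{2}$ into inequality (\ref{6}) of Theorem \ref{thm2}, whereupon the symmetry of the node kills the perturbation term and the exponent arithmetic $2^{\frac{1}{p}-1-\frac{2p+1}{p}}=2^{-3}$, $(b-a)^{\frac{2p+1}{p}-\frac{1}{p}}=(b-a)^{2}$ yields the constant $\frac{(b-a)^{2}}{8(2p+1)^{1/p}}$. Your bookkeeping checks out, so there is nothing to add.
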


\begin{theorem}
\label{thm3} Let $f:I\subset \mathbb{R}\rightarrow \mathbb{R}$ be a twice
differentiable function on $I^{\circ }$ such that $f^{\prime \prime }\in
L_{1}[a,b]$ where $a,b\in I,$ $a<b$. If $\left\vert f^{\prime \prime
}\right\vert ^{q}$ is convex on $[a,b]$ and\ $q\geq 1,$ then%
\begin{eqnarray}
&&\left\vert \dfrac{1}{b-a}\dint_{a}^{b}f(u)du-\frac{1}{2}\left[
f(x)+f(a+b-x)\right] +\frac{1}{2}(x-\frac{a+3b}{4})\left[ f^{\prime
}(x)-f^{\prime }(a+b-x)\right] \right\vert  \notag \\
&&  \label{E0} \\
&\leq &\frac{1}{3\left( b-a\right) }\left[ \left( b-x\right) ^{3}+\left( x-%
\frac{a+b}{2}\right) ^{3}\right] \left( \frac{\left\vert f^{\prime \prime
}(a)\right\vert ^{q}+\left\vert f^{\prime \prime }(b)\right\vert ^{q}}{2}%
\right) ^{\frac{1}{q}}  \notag
\end{eqnarray}%
for any $x\in \left[ \frac{a+b}{2},b\right] .$
\end{theorem}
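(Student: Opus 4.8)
The plan is to run the same machinery as in Theorems \ref{thm1} and \ref{thm2}, but now to invoke the power-mean inequality (a weighted Hölder inequality with exponent $q\ge 1$) in place of the crude convexity bound or the ordinary Hölder estimate. Starting from the identity \eqref{2} of Lemma \ref{lm} and taking absolute values, the left-hand side of \eqref{E0} is bounded by
\[
\frac{(b-a)^2}{2}\int_0^1 \left\vert k(t)\right\vert \left\vert f''(ta+(1-t)b)\right\vert\,dt .
\]
The key step is to treat $\left\vert k(t)\right\vert$ as a weight and apply the power-mean inequality in the form
\[
\int_0^1 \left\vert k\right\vert\, g \le \left(\int_0^1 \left\vert k\right\vert\right)^{1-\frac1q}\left(\int_0^1 \left\vert k\right\vert\, g^{q}\right)^{\frac1q},
\]
with $g(t)=\left\vert f''(ta+(1-t)b)\right\vert$. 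For $q=1$ this reduces to the bound used in Theorem \ref{thm1}, so the estimate is the natural interpolant between that theorem and the Hölder-based Theorem \ref{thm2}.

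Next I would insert the convexity hypothesis on $\left\vert f''\right\vert^{q}$, namely $\left\vert f''(ta+(1-t)b)\right\vert^{q}\le t\left\vert f''(a)\right\vert^{q}+(1-t)\left\vert f''(b)\right\vert^{q}$, which reduces the weighted integral $\int_0^1\left\vert k\right\vert g^{q}$ to the two moments $\int_0^1 \left\vert k(t)\right\vert\, t\,dt$ and $\int_0^1\left\vert k(t)\right\vert(1-t)\,dt$. Here I would exploit the symmetry $k(1-t)=k(t)$, which holds because the breakpoints $\tfrac{b-x}{b-a}$ and $\tfrac{x-a}{b-a}$ are symmetric about $t=\tfrac12$; this forces the two moments to be equal and each to equal $\tfrac12\int_0^1\left\vert k\right\vert\,dt$. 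Consequently $\int_0^1\left\vert k\right\vert g^{q}\le \tfrac12\left(\int_0^1\left\vert k\right\vert\right)\bigl(\left\vert f''(a)\right\vert^{q}+\left\vert f''(b)\right\vert^{q}\bigr)$.

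The only quantity still to be evaluated is $\int_0^1\left\vert k(t)\right\vert\,dt$. Rather than recompute it, I would observe that it has already appeared implicitly in the proof of Theorem \ref{thm1}: summing the coefficients of $\left\vert f''(a)\right\vert$ (equivalently of $\left\vert f''(b)\right\vert$) in $J_1+J_2+J_3$ gives exactly $\tfrac{1}{3(b-a)^3}\bigl[(b-x)^3+(x-\tfrac{a+b}{2})^3\bigr]$, so that $\int_0^1\left\vert k\right\vert\,dt=\tfrac{2}{3(b-a)^3}\bigl[(b-x)^3+(x-\tfrac{a+b}{2})^3\bigr]$. Substituting back, the two factors $\bigl(\int_0^1\left\vert k\right\vert\bigr)^{1-1/q}$ and $\bigl(\tfrac12\int_0^1\left\vert k\right\vert\bigr)^{1/q}$ recombine into a single power of $\int_0^1\left\vert k\right\vert$ times $2^{-1/q}$, the prefactor $\tfrac{(b-a)^2}{2}$ cancels the $(b-a)^{-3}$ and the $2/3$ down to $\tfrac{1}{3(b-a)}$, and one lands precisely on \eqref{E0}. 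I do not expect a genuine obstacle; the only point requiring care is the exponent bookkeeping in the power-mean step, making sure the two weighted factors collapse so that the convexity average $\bigl(\tfrac{\left\vert f''(a)\right\vert^q+\left\vert f''(b)\right\vert^q}{2}\bigr)^{1/q}$ emerges with the clean constant $\tfrac13$.
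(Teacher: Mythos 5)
Your proposal is correct, and its skeleton is exactly the paper's: start from the identity of Lemma \ref{lm}, apply the power-mean (weighted H\"{o}lder) inequality with weight $\left\vert k\right\vert$, then insert the convexity bound $\left\vert f''(ta+(1-t)b)\right\vert^{q}\le t\left\vert f''(a)\right\vert^{q}+(1-t)\left\vert f''(b)\right\vert^{q}$. Where you genuinely depart from the paper is in evaluating the resulting weighted integral $\int_0^1\left\vert k\right\vert\,g^{q}\,dt$. The paper grinds this out term by term: it splits the integral over the three subintervals $[0,\tfrac{b-x}{b-a}]$, $[\tfrac{b-x}{b-a},\tfrac{x-a}{b-a}]$, $[\tfrac{x-a}{b-a},1]$, computes six separate antiderivative evaluations (its display (\ref{E3})), and observes that the sum collapses to $\tfrac{1}{3(b-a)^{3}}\bigl[(b-x)^{3}+(x-\tfrac{a+b}{2})^{3}\bigr]\bigl(\left\vert f''(a)\right\vert^{q}+\left\vert f''(b)\right\vert^{q}\bigr)$. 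You instead exploit the symmetry $k(1-t)=k(t)$ (valid since the breakpoints $\tfrac{b-x}{b-a}$ and $\tfrac{x-a}{b-a}$ sum to $1$, and on mirrored pieces $t^{2}$, $(t-\tfrac12)^{2}$, $(t-1)^{2}$ exchange correctly), which forces $\int_0^1\left\vert k\right\vert t\,dt=\int_0^1\left\vert k\right\vert(1-t)\,dt=\tfrac12\int_0^1\left\vert k\right\vert\,dt$, so that only the single integral $\int_0^1\left\vert k\right\vert\,dt=\tfrac{2}{3(b-a)^{3}}\bigl[(b-x)^{3}+(x-\tfrac{a+b}{2})^{3}\bigr]$ (the paper's (\ref{E2})) is ever needed. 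This buys two things: it eliminates the bulk of the paper's computation, and it explains \emph{why} the symmetric average $\bigl(\tfrac{\left\vert f''(a)\right\vert^{q}+\left\vert f''(b)\right\vert^{q}}{2}\bigr)^{1/q}$ must appear, rather than having it emerge from an unexplained cancellation of quartic terms. The final exponent bookkeeping you describe ($S^{1-1/q}\cdot S^{1/q}=S$ with the $2^{-1/q}$ peeling off into the average) is exactly what happens in the paper, and it lands on (\ref{E0}) with the constant $\tfrac{1}{3(b-a)}$ as claimed.
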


\begin{proof}
From Lemma $\ref{lm}$, by the definition $k\left( t\right) $ and using by
power mean inequality, it follows that%
\begin{eqnarray}
&&\left\vert \dfrac{1}{b-a}\dint_{a}^{b}f(u)du-\frac{1}{2}\left[
f(x)+f(a+b-x)\right] +\frac{1}{2}(x-\frac{a+3b}{4})\left[ f^{\prime
}(x)-f^{\prime }(a+b-x)\right] \right\vert  \notag \\
&&  \label{E1} \\
&\leq &\frac{\left( b-a\right) ^{2}}{2}\dint_{0}^{1}\left\vert k\left(
t\right) \right\vert \left\vert f^{\prime \prime }(ta+(1-t)b)\right\vert dt 
\notag \\
&&  \notag \\
&\leq &\frac{\left( b-a\right) ^{2}}{2}\left( \dint_{0}^{1}\left\vert
k\left( t\right) \right\vert dt\right) ^{1-\frac{1}{q}}\left(
\dint_{0}^{1}\left\vert k\left( t\right) \right\vert \left\vert f^{\prime
\prime }(ta+(1-t)b)\right\vert ^{q}dt\right) ^{\frac{1}{q}}.  \notag
\end{eqnarray}%
Since $\left\vert f^{\prime \prime }\right\vert ^{q}$ is convex on $\left[
a,b\right] ,$ we know that for $t\in \lbrack 0,1]$%
\begin{equation*}
\left\vert f^{\prime \prime }(ta+(1-t)b)\right\vert ^{q}\leq t\left\vert
f^{\prime \prime }(a)\right\vert ^{q}+(1-t)\left\vert f^{\prime \prime
}(b)\right\vert ^{q},
\end{equation*}%
hence, by simple computation%
\begin{eqnarray}
\dint_{0}^{1}\left\vert k\left( t\right) \right\vert dt &=&\dint_{0}^{\frac{%
b-x}{b-a}}t^{2}dt+\dint_{\frac{b-x}{b-a}}^{\frac{x-a}{b-a}}\left\vert t-%
\frac{1}{2}\right\vert ^{2}dt+\dint_{\frac{x-a}{b-a}}^{1}\left( 1-t\right)
^{2}dt  \notag \\
&&  \label{E2} \\
&=&\frac{2}{3\left( b-a\right) ^{3}}\left[ \left( b-x\right) ^{3}+\left( x-%
\frac{a+b}{2}\right) ^{3}\right] ,  \notag
\end{eqnarray}%
and 
\begin{eqnarray}
&&\dint_{0}^{1}\left\vert k\left( t\right) \right\vert \left\vert f^{\prime
\prime }(ta+(1-t)b)\right\vert ^{q}dt  \notag \\
&&  \notag \\
&\leq &\dint_{0}^{1}\left\vert k\left( t\right) \right\vert \left(
t\left\vert f^{\prime \prime }(a)\right\vert ^{q}+(1-t)\left\vert f^{\prime
\prime }(b)\right\vert ^{q}\right) dt  \notag \\
&&  \notag \\
&=&\dint_{0}^{\frac{b-x}{b-a}}\left( t^{3}\left\vert f^{\prime \prime
}(a)\right\vert ^{q}+(t^{2}-t^{3})\left\vert f^{\prime \prime
}(b)\right\vert ^{q}\right) dt  \notag \\
&&  \notag \\
&&+\dint_{\frac{b-x}{b-a}}^{\frac{x-a}{b-a}}\left[ \left( t-\frac{1}{2}%
\right) ^{2}t\left\vert f^{\prime \prime }(a)\right\vert ^{q}+\left( t-\frac{%
1}{2}\right) ^{2}(1-t)\left\vert f^{\prime \prime }(b)\right\vert ^{q}\right]
dt  \notag \\
&&  \label{E3} \\
&&+\dint_{\frac{x-a}{b-a}}^{1}\left[ \left( t-1\right) ^{2}t\left\vert
f^{\prime \prime }(a)\right\vert ^{q}+(1-t)^{3}\left\vert f^{\prime \prime
}(b)\right\vert ^{q}\right] dt  \notag \\
&&  \notag \\
&=&\frac{\left( b-x\right) ^{4}}{4(b-a)^{4}}\left\vert f^{\prime \prime
}(a)\right\vert ^{q}+\left( \frac{\left( b-x\right) ^{3}}{3(b-a)^{3}}-\frac{%
\left( b-x\right) ^{4}}{4(b-a)^{4}}\right) \left\vert f^{\prime \prime
}(b)\right\vert ^{q}  \notag \\
&&  \notag \\
&&+\frac{1}{3(b-a)^{3}}\left( x-\frac{a+b}{2}\right) ^{3}\left\vert
f^{\prime \prime }(a)\right\vert ^{q}+\frac{1}{3(b-a)^{3}}\left( x-\frac{a+b%
}{2}\right) ^{3}\left\vert f^{\prime \prime }(b)\right\vert ^{q}  \notag \\
&&  \notag \\
&&+\left( \frac{\left( b-x\right) ^{3}}{3(b-a)^{3}}-\frac{\left( b-x\right)
^{4}}{4(b-a)^{4}}\right) \left\vert f^{\prime \prime }(a)\right\vert ^{q}+%
\frac{\left( b-x\right) ^{4}}{4(b-a)^{4}}\left\vert f^{\prime \prime
}(b)\right\vert ^{q}  \notag \\
&&  \notag \\
&=&\frac{1}{3\left( b-a\right) ^{3}}\left[ \left( b-x\right) ^{3}+\left( x-%
\frac{a+b}{2}\right) ^{3}\right] \left( \left\vert f^{\prime \prime
}(a)\right\vert ^{q}+\left\vert f^{\prime \prime }(b)\right\vert ^{q}\right)
.  \notag
\end{eqnarray}%
Using (\ref{E2}) and (\ref{E3}) in (\ref{E1}), we obtain (\ref{E0}).
\end{proof}

\begin{corollary}
\label{c3} Under the assumptions Theorem $\ref{thm3}$ with $x=b,$ we have%
\begin{equation*}
\left\vert \int\limits_{a}^{b}f(u)du-\frac{b-a}{2}\left[ f(b)+f(a)\right] +%
\frac{\left( b-a\right) ^{2}}{8}\left[ f^{\prime }(b)-f^{\prime }(a)\right]
\right\vert \leq \frac{(b-a)^{3}}{24}\left( \frac{\left\vert f^{\prime
\prime }(a)\right\vert ^{q}+\left\vert f^{\prime \prime }(b)\right\vert ^{q}%
}{2}\right) ^{\frac{1}{q}}.
\end{equation*}
\end{corollary}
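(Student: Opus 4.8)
The plan is to derive this corollary directly from Theorem \ref{thm3} by specializing the free parameter $x$ to the right endpoint $b$; no new analytic input is required. First I would check that $x=b$ lies in the admissible range $\left[\frac{a+b}{2},b\right]$ (it clearly does), so that inequality (\ref{E0}) applies, and then substitute $x=b$ into both sides.

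On the left-hand side of (\ref{E0}), the term $\frac{1}{2}\left[f(x)+f(a+b-x)\right]$ collapses to $\frac{1}{2}\left[f(b)+f(a)\right]$, the coefficient $x-\frac{a+3b}{4}$ becomes $b-\frac{a+3b}{4}=\frac{b-a}{4}$, and $f^{\prime}(x)-f^{\prime}(a+b-x)$ becomes $f^{\prime}(b)-f^{\prime}(a)$; hence the third term reduces to $\frac{b-a}{8}\left[f^{\prime}(b)-f^{\prime}(a)\right]$. On the right-hand side, the factor $(b-x)^{3}$ vanishes while $\left(x-\frac{a+b}{2}\right)^{3}=\left(\frac{b-a}{2}\right)^{3}=\frac{(b-a)^{3}}{8}$, so the bracket equals $\frac{(b-a)^{3}}{8}$ and, together with the prefactor $\frac{1}{3(b-a)}$, the whole coefficient simplifies to $\frac{(b-a)^{2}}{24}$.

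Finally I would multiply the resulting inequality through by $(b-a)$, which converts the averaged integral $\frac{1}{b-a}\int_{a}^{b}f$ into $\int_{a}^{b}f$, turns $\frac{1}{2}\left[f(b)+f(a)\right]$ into $\frac{b-a}{2}\left[f(b)+f(a)\right]$ and $\frac{b-a}{8}\left[f^{\prime}(b)-f^{\prime}(a)\right]$ into $\frac{(b-a)^{2}}{8}\left[f^{\prime}(b)-f^{\prime}(a)\right]$, and upgrades the bound to $\frac{(b-a)^{3}}{24}\left(\frac{|f^{\prime\prime}(a)|^{q}+|f^{\prime\prime}(b)|^{q}}{2}\right)^{1/q}$, exactly as stated. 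There is no genuine obstacle here: the argument is a one-line specialization of Theorem \ref{thm3}, and the only thing to be careful about is the arithmetic simplification of both sides at $x=b$ and the bookkeeping of the factor $(b-a)$.
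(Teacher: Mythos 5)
Your proposal is correct and is exactly the argument the paper intends: the corollary is obtained by substituting $x=b$ into inequality (\ref{E0}) of Theorem \ref{thm3} (where $b-\frac{a+3b}{4}=\frac{b-a}{4}$, $(b-x)^{3}=0$, and $\left(x-\frac{a+b}{2}\right)^{3}=\frac{(b-a)^{3}}{8}$ give the coefficient $\frac{(b-a)^{2}}{24}$) and then multiplying through by $b-a>0$. All the arithmetic checks out, so nothing further is needed.
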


\begin{corollary}
Under the assumptions Theorem $\ref{thm3}$ with $x=b$ and $f^{\prime
}(a)=f^{\prime }(b)$ in Theorem $\ref{thm3},$ we have 
\begin{equation}
\left\vert \dfrac{1}{b-a}\dint_{a}^{b}f(u)du-\dfrac{f\left( a\right)
+f\left( b\right) }{2}\right\vert \leq \frac{(b-a)^{2}}{24}\left( \frac{%
\left\vert f^{\prime \prime }(a)\right\vert ^{q}+\left\vert f^{\prime \prime
}(b)\right\vert ^{q}}{2}\right) ^{\frac{1}{q}}.  \label{E5}
\end{equation}
\end{corollary}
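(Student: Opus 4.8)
The plan is to derive (\ref{E5}) directly from Theorem~\ref{thm3} by specializing to the endpoint $x=b$ and then imposing the supplementary hypothesis $f^{\prime}(a)=f^{\prime}(b)$. First I would substitute $x=b$ into inequality (\ref{E0}). On the left-hand side one has $a+b-x=a$, so $\tfrac{1}{2}\left[f(x)+f(a+b-x)\right]$ becomes $\tfrac{f(a)+f(b)}{2}$; moreover $x-\tfrac{a+3b}{4}=\tfrac{b-a}{4}$, so the correction term takes the form $\tfrac{b-a}{8}\left[f^{\prime}(b)-f^{\prime}(a)\right]$.

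Next I would simplify the right-hand side of (\ref{E0}). Since $b-x=0$ when $x=b$, the first cube in the bracket vanishes, while $x-\tfrac{a+b}{2}=\tfrac{b-a}{2}$ yields $\left(x-\tfrac{a+b}{2}\right)^{3}=\tfrac{(b-a)^{3}}{8}$. Consequently the prefactor $\tfrac{1}{3(b-a)}\left[(b-x)^{3}+\left(x-\tfrac{a+b}{2}\right)^{3}\right]$ collapses to $\tfrac{1}{3(b-a)}\cdot\tfrac{(b-a)^{3}}{8}=\tfrac{(b-a)^{2}}{24}$, which is precisely the constant appearing in (\ref{E5}). (Equivalently, one could read off this corollary from Corollary~\ref{c3} by dividing through by $b-a$.)

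Finally, invoking the assumption $f^{\prime}(a)=f^{\prime}(b)$ makes the correction term $\tfrac{b-a}{8}\left[f^{\prime}(b)-f^{\prime}(a)\right]$ vanish, so the left-hand side reduces to $\bigl|\tfrac{1}{b-a}\int_{a}^{b}f(u)\,du-\tfrac{f(a)+f(b)}{2}\bigr|$, and (\ref{E5}) follows. There is no genuine obstacle in this argument, as it is a routine specialization of an already established inequality; the only point requiring care is the bookkeeping in the two simplifications above—in particular, confirming that the choice $x=b$ annihilates exactly one of the two cubic contributions and that the surviving one carries the factor $\tfrac{1}{8}$ needed to produce the constant $\tfrac{1}{24}$.
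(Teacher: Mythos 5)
Your proposal is correct and is exactly the derivation the paper intends: the corollary is stated without a separate proof, being the routine specialization of Theorem \ref{thm3} (equivalently of Corollary \ref{c3} after dividing by $b-a$) at $x=b$, where $(b-x)^{3}=0$, $\left(x-\frac{a+b}{2}\right)^{3}=\frac{(b-a)^{3}}{8}$ gives the constant $\frac{(b-a)^{2}}{24}$, and the hypothesis $f^{\prime}(a)=f^{\prime}(b)$ kills the perturbation term $\frac{b-a}{8}\left[f^{\prime}(b)-f^{\prime}(a)\right]$. All of your bookkeeping checks out.
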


\begin{corollary}
Under the assumptions Theorem $\ref{thm3}$ with $x=\frac{a+b}{2}$ in Theorem 
$\ref{thm3},$ we have 
\begin{equation}
\left\vert \dfrac{1}{b-a}\dint_{a}^{b}f(u)du-f\left( \frac{a+b}{2}\right)
\right\vert \leq \frac{(b-a)^{2}}{24}\left( \frac{\left\vert f^{\prime
\prime }(a)\right\vert ^{q}+\left\vert f^{\prime \prime }(b)\right\vert ^{q}%
}{2}\right) ^{\frac{1}{q}}.  \label{E6}
\end{equation}
\end{corollary}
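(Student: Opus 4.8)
The plan is to obtain (\ref{E6}) simply by specializing the inequality (\ref{E0}) of Theorem \ref{thm3} to the left endpoint $x=\frac{a+b}{2}$ of the admissible interval $\left[\frac{a+b}{2},b\right]$, so that no fresh estimation is required. First I would record how each ingredient behaves under this choice. The value $x=\frac{a+b}{2}$ is the unique fixed point of the reflection $x\mapsto a+b-x$, so that $a+b-x=\frac{a+b}{2}=x$; consequently $f(a+b-x)=f(x)=f\!\left(\frac{a+b}{2}\right)$ and $f^{\prime}(a+b-x)=f^{\prime}(x)$.

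Next I would simplify the left-hand side of (\ref{E0}). The averaged term becomes $\frac{1}{2}\left[f(x)+f(a+b-x)\right]=f\!\left(\frac{a+b}{2}\right)$, while the perturbation term disappears entirely: although its coefficient $x-\frac{a+3b}{4}=\frac{a-b}{4}$ is nonzero, the bracket $f^{\prime}(x)-f^{\prime}(a+b-x)$ vanishes identically. Thus the left-hand side collapses to exactly $\left|\frac{1}{b-a}\int_a^b f(u)\,du-f\!\left(\frac{a+b}{2}\right)\right|$, which is precisely the quantity appearing on the left of (\ref{E6}).

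Finally I would reduce the right-hand side. Here $b-x=\frac{b-a}{2}$ and $x-\frac{a+b}{2}=0$, so the cubic bracket satisfies $\left(b-x\right)^3+\left(x-\frac{a+b}{2}\right)^3=\frac{(b-a)^3}{8}$. Multiplying by the prefactor $\frac{1}{3(b-a)}$ gives $\frac{(b-a)^3}{8}\cdot\frac{1}{3(b-a)}=\frac{(b-a)^2}{24}$, while the convex-mean factor $\left(\frac{|f^{\prime\prime}(a)|^q+|f^{\prime\prime}(b)|^q}{2}\right)^{1/q}$ carries over unchanged, yielding exactly the bound in (\ref{E6}). There is no substantive obstacle in this argument, since it is a pure specialization: the only points requiring a moment's care are verifying that the first-derivative contribution drops out because both derivative evaluations coincide at the midpoint, and checking the elementary arithmetic that collapses the cubic bracket to $\frac{(b-a)^3}{8}$ and the prefactor to $\frac{(b-a)^2}{24}$.
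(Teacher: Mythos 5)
Your proof is correct and matches the paper's (implicit) argument exactly: the corollary is obtained by direct substitution of $x=\frac{a+b}{2}$ into inequality (\ref{E0}) of Theorem \ref{thm3}, with the first-derivative term vanishing because $f^{\prime}(x)-f^{\prime}(a+b-x)=0$ at the midpoint and the cubic bracket collapsing to $\frac{(b-a)^{3}}{8}$, giving the constant $\frac{(b-a)^{2}}{24}$. Nothing further is needed.
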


\section{Applications for special means}

Recall the following means:

(a) The arithmetic mean 
\begin{equation*}
A=A(a,b):=\dfrac{a+b}{2},\ a,b\geq 0;
\end{equation*}

(b) The geometric mean 
\begin{equation*}
G=G(a,b):=\sqrt{ab},\text{ }a,b\geq 0;
\end{equation*}

(c) The harmonic mean 
\begin{equation*}
H=H\left( a,b\right) :=\dfrac{2ab}{a+b},\ a,b>0;
\end{equation*}

(d) The logarithmic mean 
\begin{equation*}
L=L\left( a,b\right) :=\left\{ 
\begin{array}{ccc}
a & if & a=b \\ 
&  &  \\ 
\frac{b-a}{\ln b-\ln a} & if & a\neq b%
\end{array}%
\right. \text{, \ \ \ }a,b>0;
\end{equation*}

(e) The identric mean%
\begin{equation*}
I=I(a,b):=\left\{ 
\begin{array}{ccc}
a & if & a=b \\ 
&  &  \\ 
\frac{1}{e}\left( \frac{b^{b}}{a^{a}}\right) ^{\frac{1}{b-a}} & if & a\neq b%
\end{array}%
\right. \text{, \ \ \ }a,b>0;
\end{equation*}

(f) The $p-$logarithmic mean:

\begin{equation*}
L_{p}=L_{p}(a,b):=\left\{ 
\begin{array}{ccc}
\left[ \frac{b^{p+1}-a^{p+1}}{\left( p+1\right) \left( b-a\right) }\right] ^{%
\frac{1}{p}} & \text{if} & a\neq b \\ 
&  &  \\ 
a & \text{if} & a=b%
\end{array}%
\right. \text{, \ \ \ }p\in \mathbb{R\diagdown }\left\{ -1,0\right\} ;\;a,b>0%
\text{.}
\end{equation*}%
It is also known \ that $L_{p}$ is monotonically nondecreasing in $p\in 
\mathbb{R}$ with $L_{-1}:=L$ and $L_{0}:=I.$ The following simple
relationships are known in the literature%
\begin{equation*}
H\leq G\leq L\leq I\leq A.
\end{equation*}%
Now, using the results of Section 2, some new inequalities is derived for
the above means.

\begin{proposition}
Let $p>1$ and $0\leq a<b.$ Then we have the inequality:%
\begin{equation*}
\left\vert L_{p}^{p}(a,b)-A\left( a^{p},b^{p}\right) \right\vert \leq
p\left( p-1\right) \frac{\left( b-a\right) ^{2}}{24}A\left(
a^{p-2},b^{p-2}\right) .
\end{equation*}
\end{proposition}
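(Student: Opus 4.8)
The plan is to apply the trapezoid inequality proved above, namely the inequality (\ref{H9}) following Theorem \ref{thm1} (equivalently the $q=1$ case of (\ref{E5})), to the power function $f(x)=x^{p}$ on $[a,b]$. The first step is to rewrite the left-hand side in terms of $f$. Since $\frac{1}{b-a}\int_{a}^{b}u^{p}du=\frac{b^{p+1}-a^{p+1}}{(p+1)(b-a)}=L_{p}^{p}(a,b)$ and $\frac{f(a)+f(b)}{2}=\frac{a^{p}+b^{p}}{2}=A(a^{p},b^{p})$, the quantity $\left\vert L_{p}^{p}(a,b)-A(a^{p},b^{p})\right\vert$ is exactly the trapezoid functional $\left\vert \frac{1}{b-a}\int_{a}^{b}f(u)du-\frac{f(a)+f(b)}{2}\right\vert$ for this choice of $f$.

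The second step is to evaluate the right-hand side. For $f(x)=x^{p}$ we have $f^{\prime \prime }(x)=p(p-1)x^{p-2}$, which is nonnegative for $p>1$ and $x>0$, so $\left\vert f^{\prime \prime }(a)\right\vert +\left\vert f^{\prime \prime }(b)\right\vert =p(p-1)(a^{p-2}+b^{p-2})=2p(p-1)A(a^{p-2},b^{p-2})$. Substituting into the trapezoid bound $\frac{(b-a)^{2}}{48}(\left\vert f^{\prime \prime }(a)\right\vert +\left\vert f^{\prime \prime }(b)\right\vert )$ collapses the right-hand side to $p(p-1)\frac{(b-a)^{2}}{24}A(a^{p-2},b^{p-2})$, which is precisely the asserted bound. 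So once the inequality is legitimately in force, the conclusion is just bookkeeping.

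The main obstacle is verifying that the hypotheses of the trapezoid corollary genuinely hold for $f(x)=x^{p}$, and here two points require care. First is the convexity requirement: the corollary needs $\left\vert f^{\prime \prime }\right\vert$ convex, and $x^{p-2}$ is convex on $(0,\infty)$ only when $p-2\leq 0$ or $p-2\geq 1$, i.e. for $1<p\leq 2$ or $p\geq 3$; the band $2<p<3$ is concave, so it would need a separate treatment (for instance the $L_{p}$-norm version (\ref{E5}) with a suitable $q\geq \frac{1}{p-2}$, though this alters the final constant). Second, and more seriously, the trapezoid corollary was derived under the extra hypothesis $f^{\prime }(a)=f^{\prime }(b)$, which for $f(x)=x^{p}$ forces $a=b$. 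The honest route is therefore to begin from the perturbed trapezoid estimate (Corollary \ref{c3}) and to handle the leftover term $\frac{(b-a)^{2}}{8}[f^{\prime }(b)-f^{\prime }(a)]=\frac{(b-a)^{2}}{8}p(b^{p-1}-a^{p-1})$, checking carefully whether it can be absorbed into the stated bound; this is the step I expect to be decisive, since dropping that term without justification would give a constant that is too small. I would also dispose of the boundary case $a=0$ (where $x^{p-2}$ may be singular for $p<2$) by a limiting argument.
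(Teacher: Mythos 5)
Your skepticism is vindicated, and your analysis is more careful than the paper's own proof, which consists of the single sentence ``the assertion follows from (\ref{H9}) applied for $f(x)=x^{p}$'' --- precisely the computation you reconstruct and then rightly refuse to accept. Your second objection is the fatal one: (\ref{H9}) was proved only under the hypothesis $f^{\prime }(a)=f^{\prime }(b)$, which for $f(x)=x^{p}$ with $p>1$ and $a<b$ never holds, and the perturbation term cannot simply be dropped. In fact the proposition as stated is \emph{false}. Take $p=2$, where $\left\vert f^{\prime \prime }\right\vert \equiv 2$ is constant (hence convex), so the \emph{only} defect in play is the illegitimate use of (\ref{H9}). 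Then
\begin{equation*}
L_{2}^{2}(a,b)-A(a^{2},b^{2})=\frac{a^{2}+ab+b^{2}}{3}-\frac{a^{2}+b^{2}}{2}=-\frac{(b-a)^{2}}{6},
\end{equation*}
so the left-hand side equals $(b-a)^{2}/6$, while the claimed right-hand side is $2\cdot \frac{(b-a)^{2}}{24}\cdot A(1,1)=\frac{(b-a)^{2}}{12}$. The stated constant is too small by exactly a factor of $2$, confirming your prediction that discarding the term $\frac{(b-a)^{2}}{8}\left[ f^{\prime }(b)-f^{\prime }(a)\right] $ ``would give a constant that is too small.''

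Concerning the repair: your route through Corollary \ref{c1} or \ref{c3} plus the triangle inequality is legitimate but lossy. The sharp statement comes from the classical kernel identity
\begin{equation*}
\frac{f(a)+f(b)}{2}-\frac{1}{b-a}\int_{a}^{b}f(u)\,du=\frac{1}{2(b-a)}\int_{a}^{b}(u-a)(b-u)f^{\prime \prime }(u)\,du,
\end{equation*}
which, after bounding $\left\vert f^{\prime \prime }(u)\right\vert $ by the chord through $\left\vert f^{\prime \prime }(a)\right\vert $ and $\left\vert f^{\prime \prime }(b)\right\vert $ (this is where convexity of $\left\vert f^{\prime \prime }\right\vert $ enters) and using $\int_{a}^{b}(u-a)(b-u)^{2}du=\int_{a}^{b}(u-a)^{2}(b-u)\,du=\frac{(b-a)^{4}}{12}$, yields the trapezoid bound $\frac{(b-a)^{2}}{24}\left( \left\vert f^{\prime \prime }(a)\right\vert +\left\vert f^{\prime \prime }(b)\right\vert \right) $. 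Hence the correct version of the proposition is
\begin{equation*}
\left\vert L_{p}^{p}(a,b)-A(a^{p},b^{p})\right\vert \leq p(p-1)\frac{(b-a)^{2}}{12}A\left( a^{p-2},b^{p-2}\right) ,
\end{equation*}
with equality at $p=2$. Your secondary objection also stands: $x^{p-2}$ is convex on $(0,\infty )$ only for $1<p\leq 2$ or $p\geq 3$, so even this corrected bound needs a separate argument on the band $2<p<3$ (e.g.\ via $\left\Vert f^{\prime \prime }\right\Vert _{\infty }$), and the endpoint $a=0$ with $p<2$ requires the limiting argument you mention. In short, you did not complete a proof because none exists: the statement itself, not your argument, is what needs to be fixed.
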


\begin{proof}
The assertion follows from (\ref{H9}) applied for $f(x)=x^{p},$ $x\in \left[
a,b\right] .$We omitted the details.
\end{proof}

\begin{proposition}
Let $p>1$ and $0\leq a<b.$ Then we have the inequality:%
\begin{equation*}
\left\vert L^{-1}(a,b)-A^{-1}(a,b)\right\vert \leq \frac{\left( b-a\right)
^{2}}{12}A\left( a^{-3},b^{-3}\right) .
\end{equation*}
\end{proposition}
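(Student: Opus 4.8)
The plan is to apply the midpoint corollary (\ref{H10}) to the single test function $f(x)=1/x$ on $[a,b]$, taking $a>0$ (the value $a=0$ being implicitly excluded, since $1/x$ is undefined there, and the printed hypothesis $p>1$ playing no role in this statement). First I would check that the assumptions behind (\ref{H10}), inherited from Theorem \ref{thm1}, are met. The function $f(x)=1/x$ is twice differentiable on $(0,\infty)$ with $f''(x)=2/x^{3}$, so $f''\in L_{1}[a,b]$ and $\left\vert f''(x)\right\vert =2/x^{3}$; since $\bigl(2x^{-3}\bigr)''=24\,x^{-5}>0$ for $x>0$, the map $\left\vert f''\right\vert$ is convex on $[a,b]$, and hence (\ref{H10}) is available.

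Next I would match each term of (\ref{H10}) with the means in the statement. The integral average is
$\frac{1}{b-a}\int_{a}^{b}\frac{du}{u}=\frac{\ln b-\ln a}{b-a}=L^{-1}(a,b)$,
and the midpoint value is $f\!\left(\frac{a+b}{2}\right)=\frac{2}{a+b}=A^{-1}(a,b)$. This identification is the one genuine modelling decision: one must invoke the \emph{midpoint} corollary (\ref{H10}) rather than the trapezoid corollary (\ref{H9}), because evaluating $1/x$ at the midpoint reproduces the reciprocal arithmetic mean $A^{-1}(a,b)$, whereas the trapezoidal term $\frac{f(a)+f(b)}{2}$ would instead produce the reciprocal harmonic mean $H^{-1}(a,b)$.

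Finally I would evaluate the right-hand side. Since $\left\vert f''(a)\right\vert +\left\vert f''(b)\right\vert =\frac{2}{a^{3}}+\frac{2}{b^{3}}=4\,A\!\left(a^{-3},b^{-3}\right)$, substituting into (\ref{H10}) yields
$$\left\vert L^{-1}(a,b)-A^{-1}(a,b)\right\vert \le \frac{(b-a)^{2}}{48}\cdot 4\,A\!\left(a^{-3},b^{-3}\right)=\frac{(b-a)^{2}}{12}\,A\!\left(a^{-3},b^{-3}\right),$$
which is precisely the asserted bound. There is no real analytic obstacle here; the result is a direct specialization, and the only points demanding care are selecting the midpoint inequality (\ref{H10}) over its trapezoid counterpart and carrying out the constant bookkeeping $\frac{1}{48}\cdot 4=\frac{1}{12}$.
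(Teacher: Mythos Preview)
Your proof is correct and follows exactly the approach of the paper, which also applies the midpoint inequality (\ref{H10}) to $f(x)=1/x$; the paper simply states this and omits the details you have supplied. Your observations that the hypothesis $p>1$ is irrelevant here and that $a>0$ is implicitly required are accurate side remarks.
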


\begin{proof}
The assertion follows from (\ref{H10}) applied for $f(x)=\frac{1}{x},$ $x\in %
\left[ a,b\right] .$ We omitted the details.
\end{proof}

\begin{proposition}
Let $p>1$ and $0\leq a<b.$ Then we have the inequality:%
\begin{equation*}
\left\vert \ln I(a,b)-\ln G(a,b)\right\vert \leq \frac{\left( b-a\right) ^{2}%
}{8\left( 2p+1\right) ^{\frac{1}{p}}}\left[ A\left( a^{-2q},b^{-2q}\right) %
\right] ^{1/q}.
\end{equation*}
\end{proposition}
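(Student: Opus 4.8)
The plan is to apply the trapezoid inequality \eqref{H12} from Theorem \ref{thm2} to the choice $f(x)=\ln x$ on $[a,b]$ with $a>0$. First I would verify the hypotheses of Theorem \ref{thm2}: since $f^{\prime }(x)=1/x$ and $f^{\prime \prime }(x)=-1/x^{2}$, we have $\left\vert f^{\prime \prime }(x)\right\vert ^{q}=x^{-2q}$, whose second derivative is $2q(2q+1)x^{-2q-2}>0$ on $(0,\infty )$, so $\left\vert f^{\prime \prime }\right\vert ^{q}$ is convex on $[a,b]$ and the theorem applies with the given $p,q$.

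Next I would identify the left-hand side of \eqref{H12} with the quantity appearing in the statement. Integration by parts gives $\int_{a}^{b}\ln u\,du=b\ln b-a\ln a-(b-a)$, hence
\[
\frac{1}{b-a}\int_{a}^{b}\ln u\,du=\frac{b\ln b-a\ln a}{b-a}-1=\ln I(a,b)
\]
by the definition of the identric mean. On the other hand, $\tfrac{1}{2}\left( f(a)+f(b)\right) =\tfrac{1}{2}\left( \ln a+\ln b\right) =\ln \sqrt{ab}=\ln G(a,b)$. Therefore the left-hand side of \eqref{H12} for this $f$ is precisely $\left\vert \ln I(a,b)-\ln G(a,b)\right\vert $.

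Finally I would compute the right-hand side. Since $\left\vert f^{\prime \prime }(a)\right\vert ^{q}=a^{-2q}$ and $\left\vert f^{\prime \prime }(b)\right\vert ^{q}=b^{-2q}$, the factor $\bigl( \tfrac{1}{2}(\left\vert f^{\prime \prime }(a)\right\vert ^{q}+\left\vert f^{\prime \prime }(b)\right\vert ^{q})\bigr) ^{1/q}$ becomes $\left[ A(a^{-2q},b^{-2q})\right] ^{1/q}$, which reproduces exactly the right-hand side of the claimed inequality, completing the argument.

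I do not anticipate any substantial obstacle: the proof is a direct specialization of \eqref{H12}. The only points that require care are recognizing the standard integral representation $\ln I(a,b)=\frac{1}{b-a}\int_{a}^{b}\ln u\,du$ of the identric mean and confirming the convexity of $x^{-2q}$ on $(0,\infty )$; both are elementary, so the bulk of the work is merely substituting $f(x)=\ln x$ and simplifying.
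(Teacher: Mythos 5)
You are following exactly the same route as the paper's own one-line proof: specialize the trapezoid inequality \eqref{H12} to the logarithm (the paper takes $f(x)=-\ln x$, you take $f(x)=\ln x$; the difference is immaterial, since both sides of \eqref{H12} are unchanged under $f\mapsto -f$), and your identifications $\frac{1}{b-a}\int_{a}^{b}\ln u\,du=\ln I(a,b)$, $\frac{\ln a+\ln b}{2}=\ln G(a,b)$, and $\bigl(\frac{|f''(a)|^{q}+|f''(b)|^{q}}{2}\bigr)^{1/q}=\left[ A(a^{-2q},b^{-2q})\right] ^{1/q}$ are all correct. However, there is a genuine gap, and it is one the paper itself commits: \eqref{H12} is not a consequence of Theorem \ref{thm2} alone. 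The corollary containing \eqref{H12} is stated under the additional hypothesis $f'(a)=f'(b)$, which is precisely what makes the perturbation term $\frac{1}{2}\bigl(x-\frac{a+3b}{4}\bigr)\bigl[f'(x)-f'(a+b-x)\bigr]=\frac{b-a}{8}\bigl[f'(b)-f'(a)\bigr]$ (at $x=b$) disappear. For the logarithm, $f'(a)=1/a\neq 1/b=f'(b)$, so \eqref{H12} does not apply; you verified the hypotheses of Theorem \ref{thm2} but not the extra hypothesis of the corollary. What Theorem \ref{thm2} with $x=b$ (equivalently, Corollary \ref{c2} divided by $b-a$) actually gives for $f(x)=\ln x$ is
\begin{equation*}
\left\vert \ln I(a,b)-\ln G(a,b)-\frac{(b-a)^{2}}{8ab}\right\vert \leq \frac{(b-a)^{2}}{8(2p+1)^{1/p}}\left[ A\left( a^{-2q},b^{-2q}\right) \right] ^{1/q},
\end{equation*}
and the term $\frac{(b-a)^{2}}{8ab}$ cannot simply be dropped.

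Moreover, the gap is not repairable, because the stated proposition is false. Take $a=1$, $b=2$, $p=3/2$ (so $q=3$): then $\ln I(1,2)-\ln G(1,2)=\frac{3}{2}\ln 2-1\approx 0.03972$, while the claimed bound equals $\frac{1}{8\cdot 4^{2/3}}\bigl(\frac{1+2^{-6}}{2}\bigr)^{1/3}\approx 0.03958$, so the asserted inequality fails; by contrast, the perturbed inequality displayed above does hold there, since $\bigl\vert 0.03972-\frac{1}{16}\bigr\vert \approx 0.02278\leq 0.03958$. The honest conclusions available by this method are either the perturbed version above, or the weaker bound obtained by moving $\frac{(b-a)^{2}}{8ab}$ to the right-hand side. (This criticism does not affect the midpoint-based propositions of the paper, since at $x=\frac{a+b}{2}$ the perturbation term vanishes identically with no condition on $f'$.)
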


\begin{proof}
The assertion follows from (\ref{H12}) applied for $f(x)=-\ln x,$ $x\in %
\left[ a,b\right] .$
\end{proof}

\begin{proposition}
Let $p>1$ and $0\leq a<b.$ Then we have the inequality:%
\begin{equation*}
\left\vert L_{p}^{p}(a,b)-A^{p}(a,b)\right\vert \leq p\left( p-1\right) 
\frac{\left( b-a\right) ^{2}}{8\left( 2p+1\right) ^{1/p}}\left[ A\left(
a^{q(p-2)},b^{q(p-2)}\right) \right] ^{1/q}.
\end{equation*}
\end{proposition}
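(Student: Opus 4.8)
The plan is to recognize the left-hand side as the midpoint functional and to specialize the midpoint inequality (\ref{H13}) (coming from Theorem \ref{thm2}) to the power function $f(x)=x^{p}$ on $[a,b]$. First I would record the two elementary identities
\begin{equation*}
\frac{1}{b-a}\int_{a}^{b}x^{p}\,dx=\frac{b^{p+1}-a^{p+1}}{(p+1)(b-a)}=L_{p}^{p}(a,b),\qquad f\!\left(\frac{a+b}{2}\right)=\left(\frac{a+b}{2}\right)^{p}=A^{p}(a,b),
\end{equation*}
so that $\left|L_{p}^{p}(a,b)-A^{p}(a,b)\right|$ is \emph{exactly} the midpoint expression $\left|\frac{1}{b-a}\int_{a}^{b}f(u)\,du-f\!\left(\frac{a+b}{2}\right)\right|$ on the left of (\ref{H13}). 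This is the observation that pins down $f(x)=x^{p}$ as the correct choice and (\ref{H13}) as the correct tool (note it is the midpoint, not the trapezoid, form, since the subtracted term is $A^{p}$ and not $A(a^{p},b^{p})$).

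Next I would compute $f''(x)=p(p-1)x^{p-2}$, which for $p>1$ and $x>0$ is positive, so $\left|f''(x)\right|=p(p-1)x^{p-2}$. Substituting $x=a,b$ into the right-hand side of (\ref{H13}) gives
\begin{equation*}
\left(\frac{\left|f''(a)\right|^{q}+\left|f''(b)\right|^{q}}{2}\right)^{1/q}=p(p-1)\left(\frac{a^{q(p-2)}+b^{q(p-2)}}{2}\right)^{1/q}=p(p-1)\left[A\!\left(a^{q(p-2)},b^{q(p-2)}\right)\right]^{1/q},
\end{equation*}
after which the constant factor $\frac{(b-a)^{2}}{8(2p+1)^{1/p}}$ carries through to produce precisely the claimed bound. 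So, modulo verifying the hypotheses, the statement follows by direct substitution and the arithmetic can be omitted.

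The step I expect to require genuine care, rather than routine computation, is checking the hypothesis of Theorem \ref{thm2} that $\left|f''\right|^{q}$ be convex on $[a,b]$. Here $\left|f''(x)\right|^{q}=[p(p-1)]^{q}x^{q(p-2)}$, and $x^{r}$ is convex on $(0,\infty)$ precisely when $r\le 0$ or $r\ge 1$. For $1<p\le 2$ the exponent $r=q(p-2)\le 0$, so convexity is immediate; but for $p>2$ one needs $q(p-2)=\frac{p(p-2)}{p-1}\ge 1$, i.e. $p^{2}-3p+1\ge 0$, which fails on $2<p<\frac{3+\sqrt{5}}{2}$. A fully rigorous version would therefore either restrict the range of $p$ (to $1<p\le 2$ or $p\ge\frac{3+\sqrt{5}}{2}$) or supply a separate convexity argument, and one would also have to confront the integrability of $f''$ at $x=0$ when $a=0$ and $p<2$. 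Subject to this convexity caveat, I would simply apply (\ref{H13}) to $f(x)=x^{p}$ and suppress the elementary details.
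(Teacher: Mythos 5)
Your proposal takes exactly the paper's route: the paper's entire proof of this proposition is the one-line application of the midpoint inequality (\ref{H13}) to $f(x)=x^{p}$ on $[a,b]$, which is precisely your substitution. The convexity caveat you raise is genuine and is in fact a gap in the paper itself, not in your argument: since $\left\vert f^{\prime\prime}(x)\right\vert^{q}=[p(p-1)]^{q}x^{q(p-2)}$ and $x^{r}$ is convex on $(0,\infty)$ only when $r\leq 0$ or $r\geq 1$, the hypothesis of Theorem \ref{thm2} fails for $2<p<\frac{3+\sqrt{5}}{2}$ (and the stated bound is vacuous when $a=0$ and $p<2$), so your version, which flags the needed restriction on $p$, is more careful than the published proof.
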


\begin{proof}
The assertion follows from (\ref{H13}) applied for $f(x)=x^{p},$ $x\in \left[
a,b\right] .$
\end{proof}

\begin{proposition}
Let $p>1$ and $0\leq a<b.$ Then we have the inequality:%
\begin{equation*}
\left\vert L^{-1}(a,b)-H^{-1}(a,b)\right\vert \leq \frac{\left( b-a\right)
^{2}}{12}\left[ A\left( a^{-3q},b^{-3q}\right) \right] ^{1/q}.
\end{equation*}
\end{proposition}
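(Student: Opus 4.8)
The plan is to specialize the trapezoid estimate $(\ref{E5})$ of Theorem \ref{thm3} to the function $f(x)=\frac1x$ on $[a,b]$ (with $0<a<b$, so that $f$ is twice differentiable and $|f''|^q$ is convex there, since $|f''(x)|^q=2^qx^{-3q}$). The choice is dictated by the two terms on the left of $(\ref{E5})$: the averaged integral of $1/x$ reproduces $L^{-1}(a,b)$, while the trapezoidal value $\frac{f(a)+f(b)}{2}$ reproduces $H^{-1}(a,b)$, which are exactly the quantities appearing in the claim.

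First I would evaluate the two terms on the left-hand side. Since $\frac{1}{b-a}\int_a^b \frac{du}{u}=\frac{\ln b-\ln a}{b-a}=L^{-1}(a,b)$ and $\frac{f(a)+f(b)}{2}=\frac12\left(\frac1a+\frac1b\right)=\frac{a+b}{2ab}=H^{-1}(a,b)$, the left-hand side of $(\ref{E5})$ is precisely $\left|L^{-1}(a,b)-H^{-1}(a,b)\right|$. Next I would treat the right-hand side: from $f''(x)=2x^{-3}$ we get $|f''(a)|^q+|f''(b)|^q=2^q\left(a^{-3q}+b^{-3q}\right)=2^{q+1}A\!\left(a^{-3q},b^{-3q}\right)$, hence $\left(\frac{|f''(a)|^q+|f''(b)|^q}{2}\right)^{1/q}=2\left[A\!\left(a^{-3q},b^{-3q}\right)\right]^{1/q}$. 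Substituting this into the bound $\frac{(b-a)^2}{24}\big(\cdots\big)^{1/q}$ of $(\ref{E5})$, the factor $2$ turns $\frac1{24}$ into $\frac1{12}$ and yields exactly $\frac{(b-a)^2}{12}\left[A\!\left(a^{-3q},b^{-3q}\right)\right]^{1/q}$, which is the asserted inequality. All of this is routine computation.

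The main point requiring care is the applicability of $(\ref{E5})$, whose hypothesis includes $f'(a)=f'(b)$. This condition is what collapses the general identity of Theorem \ref{thm3} (taken at $x=b$) from its perturbed trapezoid form to the two-term form used in $(\ref{E5})$, by annihilating the term $\frac12\big(b-\frac{a+3b}{4}\big)\big[f'(b)-f'(a)\big]=\frac{b-a}{8}\big[f'(b)-f'(a)\big]$. For $f(x)=1/x$ one has $f'(x)=-x^{-2}$, which is strictly increasing on $(0,\infty)$, so $f'(a)\neq f'(b)$ whenever $a<b$ and this perturbation term does not vanish. Thus the crux is to justify discarding it: either one reads $(\ref{E5})$ as the intended unperturbed trapezoid bound, so that the computation above applies verbatim, or one starts from the general estimate $(\ref{E0})$ at $x=b$ and must argue that the extra term $\frac{b-a}{8}\left(a^{-2}-b^{-2}\right)$ can be absorbed. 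I expect this to be the only genuine obstacle; once the use of $(\ref{E5})$ is granted, the remaining steps are the elementary identities recorded above.
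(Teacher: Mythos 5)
Your computation is exactly the paper's own proof: the paper disposes of this proposition in one line, namely ``The assertion follows from (\ref{E5}) applied for $f(x)=\frac{1}{x}$,'' and the identities you record ($\frac{1}{b-a}\int_a^b\frac{du}{u}=L^{-1}(a,b)$, $\frac{f(a)+f(b)}{2}=H^{-1}(a,b)$, and $|f''(x)|=2x^{-3}$ turning the constant $\frac{1}{24}$ into $\frac{1}{12}$) are precisely what is being suppressed there. The obstacle you flagged is therefore not an artifact of your reading but a genuine gap in the paper itself: the corollary containing (\ref{E5}) is stated under the hypothesis $f'(a)=f'(b)$, which is what annihilates the perturbation term $\frac{b-a}{8}\left[f'(b)-f'(a)\right]$ when one sets $x=b$ in Theorem \ref{thm3}, and for $f(x)=1/x$ this hypothesis fails, since $f'(x)=-x^{-2}$ is strictly increasing.

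Moreover, neither of the two repairs you propose can succeed, because the proposition as stated is false. Reading (\ref{E5}) as an unconditional trapezoid bound is illegitimate: with the constant $\frac{1}{24}$ such a bound fails already for $f(x)=x^{2}$, whose trapezoid error is $\frac{(b-a)^{2}}{6}$ while the would-be bound is $\frac{(b-a)^{2}}{12}$. Absorbing the perturbation is also hopeless: since $H\leq L$ one has $L^{-1}(a,b)-H^{-1}(a,b)\leq 0$, whereas $\frac{b-a}{8}\left(a^{-2}-b^{-2}\right)>0$, so passing from the valid perturbed estimate (Corollary \ref{c3}) back to $\left|L^{-1}-H^{-1}\right|$ by the triangle inequality only enlarges the right-hand side. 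Concretely, take $a=1$, $b=2$: the left-hand side equals $\frac{3}{4}-\ln 2\approx 0.0569$, while as $q\to 1^{+}$ (i.e.\ $p\to\infty$) the right-hand side tends to $\frac{1}{12}\cdot\frac{1+2^{-3}}{2}=\frac{3}{64}\approx 0.0469$; by continuity the claimed inequality is violated for all $q$ sufficiently close to $1$. So what you called ``the only genuine obstacle'' is an irreparable error inherited from the paper, and the only conclusion actually available from Theorem \ref{thm3} at $x=b$ for $f(x)=1/x$ is the perturbed estimate of Corollary \ref{c3}, not the stated proposition.
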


\begin{proof}
The assertion follows from (\ref{E5}) applied for $f(x)=\frac{1}{x},$ $x\in %
\left[ a,b\right] .$
\end{proof}

\begin{proposition}
Let $p>1$ and $0\leq a<b.$ Then we have the inequality:%
\begin{equation*}
\left\vert \ln I(a,b)-\ln A(a,b)\right\vert \leq \frac{\left( b-a\right) ^{2}%
}{24}\left[ A\left( a^{-2q},b^{-2q}\right) \right] ^{1/q}.
\end{equation*}
\end{proposition}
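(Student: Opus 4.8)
The plan is to apply the midpoint inequality \eqref{E6} of Theorem \ref{thm3} to the function $f(x)=-\ln x$ on $[a,b]$, where we tacitly use $a>0$ since the means are only defined for positive arguments. First I would record $f^{\prime }(x)=-1/x$ and $f^{\prime \prime }(x)=1/x^{2}$, so that $\left\vert f^{\prime \prime }(x)\right\vert ^{q}=x^{-2q}$. Before invoking \eqref{E6} I must confirm its hypothesis that $\left\vert f^{\prime \prime }\right\vert ^{q}$ is convex on $[a,b]$; since $\frac{d^{2}}{dx^{2}}x^{-2q}=2q(2q+1)x^{-2q-2}>0$ for every $x>0$, the function $x^{-2q}$ is convex for all $q\geq 1$, so the hypothesis is met.

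Next I would identify both sides of \eqref{E6} under this choice of $f$. For the averaged integral, an integration by parts gives $\int_{a}^{b}\ln u\,du=b\ln b-a\ln a-(b-a)$, hence
\[
\frac{1}{b-a}\int_{a}^{b}f(u)\,du=1-\frac{b\ln b-a\ln a}{b-a}=-\ln I(a,b),
\]
the last equality being exactly the definition of the identric mean $I$. For the midpoint term, $f\!\left( \frac{a+b}{2}\right) =-\ln \!\left( \frac{a+b}{2}\right) =-\ln A(a,b)$. Therefore the left-hand side of \eqref{E6} equals $\left\vert -\ln I(a,b)+\ln A(a,b)\right\vert =\left\vert \ln I(a,b)-\ln A(a,b)\right\vert $, precisely the quantity in the statement.

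It then remains to read off the right-hand side. Substituting $\left\vert f^{\prime \prime }(a)\right\vert ^{q}=a^{-2q}$ and $\left\vert f^{\prime \prime }(b)\right\vert ^{q}=b^{-2q}$ yields
\[
\left( \frac{\left\vert f^{\prime \prime }(a)\right\vert ^{q}+\left\vert f^{\prime \prime }(b)\right\vert ^{q}}{2}\right) ^{1/q}=\left( \frac{a^{-2q}+b^{-2q}}{2}\right) ^{1/q}=\left[ A\!\left( a^{-2q},b^{-2q}\right) \right] ^{1/q},
\]
so that \eqref{E6} turns verbatim into the asserted inequality. I expect the only step requiring genuine attention to be the bookkeeping that matches the normalized integral of $-\ln u$ against $-\ln I(a,b)$ via the explicit form of the identric mean; the convexity check and the final substitution are entirely routine.
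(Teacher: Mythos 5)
Your proposal is correct and is exactly the paper's proof: the paper likewise obtains the result by applying the midpoint inequality (\ref{E6}) of Theorem \ref{thm3} to $f(x)=-\ln x$, only stating this in one line while you supply the routine verifications (convexity of $x^{-2q}$, the identification of the normalized integral with $-\ln I(a,b)$, and the substitution into the right-hand side). Your remark that one needs $a>0$ rather than the stated $a\geq 0$ is a fair correction of a small slip in the proposition's hypotheses.
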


\begin{proof}
The assertion follows from (\ref{E6}) applied for $f(x)=-\ln x,$ $x\in \left[
a,b\right] .$
\end{proof}

\section{Applications for composite quadrature formula}

Let $d$ be a division $a=x_{0}<x_{1}<...<x_{n-1}<x_{n}=b$ of the interval $%
\left[ a,b\right] $ and $\xi =(\xi _{0},...,\xi _{n-1})$ a sequence of
intermediate points, $\xi _{i}\in \lbrack x_{i},x_{i+1}],\ i=\overline{0,n-1}%
.$ Then the following result holds:

\begin{theorem}
Let $f:I\subset \mathbb{R}\rightarrow \mathbb{R}$ be a twice differentiable
function on $I^{\circ }$ such that $f^{\prime \prime }\in L_{1}[a,b]$ where $%
a,b\in I,$ $a<b$. If $\left\vert f^{\prime \prime }\right\vert $ is convex
on $[a,b]$ then we have 
\begin{equation*}
\dint_{a}^{b}f(u)du=A(f,f^{\prime },d,\xi )+R(f,f^{\prime },d,\xi )
\end{equation*}%
where 
\begin{eqnarray*}
A(f,f^{\prime },d,\xi ) &:&=\dsum\limits_{i=0}^{n-1}\frac{h_{i}}{2}\left[
f(\xi _{i})+f(x_{i}+x_{i+1}-\xi _{i})\right] \\
&& \\
&&-\dsum\limits_{i=0}^{n-1}\frac{h_{i}}{2}\left( \xi _{i}-\frac{%
x_{i}+3x_{i+1}}{4}\right) \left[ f^{\prime }(\xi _{i})-f^{\prime
}(x_{i}+x_{i+1}-\xi _{i})\right] .
\end{eqnarray*}%
The remainder $R(f,f^{\prime },d,\xi )$ satisfies the estimation:%
\begin{equation}
\left\vert R(f,f^{\prime },d,\xi )\right\vert \leq \dsum\limits_{i=0}^{n-1} 
\left[ \left( x_{i+1}-\xi _{i}\right) ^{3}+\left( \xi _{i}-\frac{%
x_{i}+x_{i+1}}{2}\right) ^{3}\right] \left( \frac{\left\vert f^{\prime
\prime }(x_{i})\right\vert +\left\vert f^{\prime \prime
}(x_{i+1})\right\vert }{6}\right)  \label{a1}
\end{equation}%
for any choice $\xi $ of the intermediate points.
\end{theorem}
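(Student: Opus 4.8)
The plan is to reduce the composite estimate to the single-interval inequality of Theorem \ref{thm1} by exploiting the additivity of the integral over the division $d$. First I would write
\[
\int_a^b f(u)\,du = \sum_{i=0}^{n-1}\int_{x_i}^{x_{i+1}} f(u)\,du,
\]
and treat each subinterval $[x_i,x_{i+1}]$ as an independent copy of the interval $[a,b]$ appearing in Theorem \ref{thm1}. Concretely, on the $i$-th piece I apply Theorem \ref{thm1} with the substitutions $a\mapsto x_i$, $b\mapsto x_{i+1}$, $x\mapsto \xi_i$, and $b-a\mapsto h_i$, which is legitimate provided $\xi_i\in[\tfrac{x_i+x_{i+1}}{2},x_{i+1}]$ so that the hypothesis $x\in[\tfrac{a+b}{2},b]$ is met on each cell.

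The second step is to clear the factor $1/(b-a)$ in (\ref{4}). Multiplying the $i$-th instance of Theorem \ref{thm1} through by $h_i$ yields
\[
\left|\int_{x_i}^{x_{i+1}} f(u)\,du - A_i\right| \leq \left[(x_{i+1}-\xi_i)^3 + \left(\xi_i - \tfrac{x_i+x_{i+1}}{2}\right)^3\right]\left(\frac{\left|f''(x_i)\right| + \left|f''(x_{i+1})\right|}{6}\right),
\]
where
\[
A_i := \frac{h_i}{2}\left[f(\xi_i)+f(x_i+x_{i+1}-\xi_i)\right] - \frac{h_i}{2}\left(\xi_i-\tfrac{x_i+3x_{i+1}}{4}\right)\left[f'(\xi_i)-f'(x_i+x_{i+1}-\xi_i)\right]
\]
is exactly the $i$-th summand of $A(f,f',d,\xi)$. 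Setting $R_i := \int_{x_i}^{x_{i+1}} f(u)\,du - A_i$ and summing over $i$ then gives the asserted identity $\int_a^b f(u)\,du = A(f,f',d,\xi)+R(f,f',d,\xi)$ with $R=\sum_{i=0}^{n-1} R_i$.

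Finally I would apply the triangle inequality $|R(f,f',d,\xi)|\le \sum_{i=0}^{n-1}|R_i|$ and substitute the per-cell bound just obtained; the resulting right-hand side is precisely (\ref{a1}). No step requires anything beyond the single-interval theorem together with additivity of the integral, so there is no genuine analytic obstacle. The only points that demand care are the bookkeeping of the sign and of the $h_i$ prefactor when localizing (\ref{4}) to $[x_i,x_{i+1}]$, and the implicit requirement that each intermediate point $\xi_i$ lie in the upper half $[\tfrac{x_i+x_{i+1}}{2},x_{i+1}]$ so that Theorem \ref{thm1} is applicable on every cell.
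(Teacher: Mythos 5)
Your proof is correct and follows essentially the same route as the paper's own argument: apply Theorem \ref{thm1} on each cell $[x_i,x_{i+1}]$ (cleared of the $1/(b-a)$ factor, i.e.\ multiplied by $h_i$), sum over $i$, and finish with the triangle inequality. Your closing caveat is in fact a point the paper glosses over: its phrase ``for any choice $\xi$ of the intermediate points'' is too generous, since Theorem \ref{thm1} --- and even the nonnegativity of the summands in (\ref{a1}) --- requires $\xi_i\in\left[\frac{x_i+x_{i+1}}{2},x_{i+1}\right]$ on every cell, exactly as you note.
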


\begin{proof}
Apply Theorem \ref{thm1} on the interval $[x_{i},x_{i+1}],\ \ i=\overline{%
0,n-1}$ to get%
\begin{multline*}
\left\vert \frac{h_{i}}{2}\left[ f(\xi _{i})+f(x_{i}+x_{i+1}-\xi _{i})\right]
-\frac{h_{i}}{2}\left( \xi _{i}-\frac{x_{i}+3x_{i+1}}{4}\right) \right. \\
\\
\left. \times \left[ f^{\prime }(\xi _{i})-f^{\prime }(x_{i}+x_{i+1}-\xi
_{i})\right] -\dint_{a}^{b}f(u)du\right\vert \\
\\
\leq \left[ \left( x_{i+1}-\xi _{i}\right) ^{3}+\left( \xi _{i}-\frac{%
x_{i}+x_{i+1}}{2}\right) ^{3}\right] \left( \frac{\left\vert f^{\prime
\prime }(x_{i})\right\vert +\left\vert f^{\prime \prime
}(x_{i+1})\right\vert }{6}\right) .
\end{multline*}
\end{proof}

Summing the above inequalities over $i$ from $0$ to $n-1$ and using the
generalized triangle inequality, we get the desired estimation (\ref{a1}).

\begin{corollary}
The following perturbed trapezoid rule holds:%
\begin{equation*}
\dint_{a}^{b}f(u)du=T(f,f^{\prime },d)+R_{T}(f,f^{\prime },d)
\end{equation*}%
where%
\begin{equation*}
T(f,f^{\prime },d):=\dsum\limits_{i=0}^{n-1}\frac{h_{i}}{2}\left[
f(x_{i})+f(x_{i+1})\right] -\dsum\limits_{i=0}^{n-1}\frac{\left(
h_{i}\right) ^{2}}{8}\left[ f^{\prime }(x_{i+1})-f^{\prime }(x_{i})\right]
\end{equation*}%
and the remainder term $R_{T}(f,f^{\prime },d)$ satisfies the estimation,%
\begin{equation*}
R_{T}(f,f^{\prime },d)\leq \dsum\limits_{i=0}^{n-1}\dfrac{(h_{i})^{3}}{48}%
(\left\vert f^{\prime \prime }(x_{i})\right\vert +\left\vert f^{\prime
\prime }(x_{i+1})\right\vert ).
\end{equation*}
\end{corollary}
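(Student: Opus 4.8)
The plan is to derive this corollary by specializing the immediately preceding composite quadrature theorem to the particular sequence of intermediate points $\xi_i = x_{i+1}$ for every $i = \overline{0,n-1}$. This choice corresponds, on each subinterval $[x_i, x_{i+1}]$, to taking $x = b$ in the local estimate, i.e.\ to the perturbed trapezoid case recorded in Corollary~\ref{c1}; so the composite quadrature $A(f,f^{\prime},d,\xi)$ ought to collapse to the perturbed trapezoid sum $T(f,f^{\prime},d)$, and the remainder bound (\ref{a1}) to the asserted estimate. Since $\xi_i = x_{i+1}\in[x_i,x_{i+1}]$ is an admissible intermediate point and $\left\vert f^{\prime\prime}\right\vert$ is convex on each subinterval by restriction, the composite theorem applies verbatim.

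First I would substitute $\xi_i = x_{i+1}$ into the definition of $A(f,f^{\prime},d,\xi)$. One has $f(\xi_i) = f(x_{i+1})$ and $f(x_i + x_{i+1} - \xi_i) = f(x_i)$, so the first sum becomes $\sum_{i=0}^{n-1}\frac{h_i}{2}[f(x_i)+f(x_{i+1})]$. For the derivative term I would compute $\xi_i - \frac{x_i+3x_{i+1}}{4} = \frac{h_i}{4}$ and $f^{\prime}(\xi_i) - f^{\prime}(x_i+x_{i+1}-\xi_i) = f^{\prime}(x_{i+1}) - f^{\prime}(x_i)$, so the second sum becomes $-\sum_{i=0}^{n-1}\frac{(h_i)^2}{8}[f^{\prime}(x_{i+1})-f^{\prime}(x_i)]$. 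Together these show $A(f,f^{\prime},d,\xi) = T(f,f^{\prime},d)$ under this choice, and defining $R_T := R(f,f^{\prime},d,\xi)$ accordingly gives the stated identity.

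For the remainder bound I would insert the same substitution into (\ref{a1}). Since $x_{i+1} - \xi_i = 0$ the first cubic term vanishes, while $\xi_i - \frac{x_i+x_{i+1}}{2} = \frac{h_i}{2}$ contributes $\frac{(h_i)^3}{8}$; multiplying by the factor $\frac{1}{6}(\left\vert f^{\prime\prime}(x_i)\right\vert+\left\vert f^{\prime\prime}(x_{i+1})\right\vert)$ and summing over $i$ produces exactly $\sum_{i=0}^{n-1}\frac{(h_i)^3}{48}(\left\vert f^{\prime\prime}(x_i)\right\vert+\left\vert f^{\prime\prime}(x_{i+1})\right\vert)$, as claimed.

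There is no genuine obstacle here: the statement is a direct specialization, and the entire content is the elementary algebra of setting $\xi_i = x_{i+1}$ and simplifying the coefficients $\frac{h_i}{4}$, $\frac{(h_i)^2}{8}$, and $\frac{(h_i)^3}{8}$. The only point requiring a moment's care is the sign bookkeeping in the derivative term, where the leading minus sign in the definition of $A(f,f^{\prime},d,\xi)$ must be tracked through so as to match the minus sign appearing in $T(f,f^{\prime},d)$.
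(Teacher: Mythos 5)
Your proof is correct and is exactly the route the paper intends: the corollary is the specialization $\xi_i = x_{i+1}$ (equivalently, the $x=b$ case of Theorem \ref{thm1} applied on each subinterval, as in Corollary \ref{c1}) of the composite quadrature theorem, and your algebra for the coefficients $\frac{h_i}{4}$, $\frac{(h_i)^2}{8}$, and $\frac{(h_i)^3}{48}$ checks out. No discrepancies with the paper's (implicit) argument.
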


\begin{corollary}
The following midpoint rule holds:%
\begin{equation*}
\dint_{a}^{b}f(u)du=M(f,d)+R_{M}(f,d)
\end{equation*}%
where%
\begin{equation*}
M(f,d):=\dsum\limits_{i=0}^{n-1}h_{i}\left[ f(\frac{x_{i}+x_{i+1}}{2})\right]
\end{equation*}%
and the remainder term $R_{M}(f,d)$ satisfies the estimation,%
\begin{equation*}
R_{M}(f,d)\leq \dsum\limits_{i=0}^{n-1}\dfrac{(h_{i})^{3}}{48}(\left\vert
f^{\prime \prime }(x_{i})\right\vert +\left\vert f^{\prime \prime
}(x_{i+1})\right\vert ).
\end{equation*}
\end{corollary}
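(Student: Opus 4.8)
The plan is to obtain this midpoint rule as the special case of the general composite theorem above in which every intermediate point is taken to be the midpoint of its subinterval, namely $\xi_i=\frac{x_i+x_{i+1}}{2}$ for each $i=0,\ldots,n-1$. That theorem already supplies both the decomposition $\int_a^b f(u)\,du=A(f,f',d,\xi)+R(f,f',d,\xi)$ and the remainder estimate for an \emph{arbitrary} admissible choice of $\xi$, so it remains only to substitute this particular $\xi$ and simplify.

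First I would record the symmetry produced by the midpoint choice: with $\xi_i=\frac{x_i+x_{i+1}}{2}$ one has $x_i+x_{i+1}-\xi_i=\xi_i$, so the two function arguments occurring in $A(f,f',d,\xi)$ coincide. Hence $f(\xi_i)+f(x_i+x_{i+1}-\xi_i)=2f\left(\frac{x_i+x_{i+1}}{2}\right)$, and the first sum in $A$ collapses to $\sum_{i=0}^{n-1}h_i\,f\left(\frac{x_i+x_{i+1}}{2}\right)=M(f,d)$. At the same time the derivative bracket vanishes, $f'(\xi_i)-f'(x_i+x_{i+1}-\xi_i)=0$, so the entire second sum drops out irrespective of its coefficient $\xi_i-\frac{x_i+3x_{i+1}}{4}$. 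This yields $A(f,f',d,\xi)=M(f,d)$, and setting $R_M(f,d):=R(f,f',d,\xi)$ gives the asserted identity $\int_a^b f(u)\,du=M(f,d)+R_M(f,d)$.

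Next I would evaluate the remainder bound of the general theorem at the same $\xi$. The two cubed quantities appearing there simplify at once: $x_{i+1}-\xi_i=\frac{h_i}{2}$ while $\xi_i-\frac{x_i+x_{i+1}}{2}=0$, so the bracket $\big[(x_{i+1}-\xi_i)^3+(\xi_i-\frac{x_i+x_{i+1}}{2})^3\big]$ reduces to $\frac{h_i^3}{8}$. Multiplying by the factor $\frac{|f''(x_i)|+|f''(x_{i+1})|}{6}$ and summing over $i$ then produces $|R_M(f,d)|\le\sum_{i=0}^{n-1}\frac{h_i^3}{48}\big(|f''(x_i)|+|f''(x_{i+1})|\big)$, which is exactly the claimed estimate, since $\frac{1}{8}\cdot\frac{1}{6}=\frac{1}{48}$.

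I do not expect any genuine obstacle, as the corollary is a pure specialization of an already-proven theorem; the only points needing care are the two algebraic simplifications — the cancellation of the derivative term and the evaluation of the surviving cube as $h_i^3/8$ — which together account both for the midpoint form of $M(f,d)$ and for the numerical constant $1/48$ in the remainder.
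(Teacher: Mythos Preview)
Your proposal is correct and is exactly the intended argument: the paper states this corollary without proof, leaving it as the obvious specialization of the preceding composite theorem with $\xi_i=\frac{x_i+x_{i+1}}{2}$, and your simplifications of $A(f,f',d,\xi)$ and of the remainder bound are the right ones.
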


\end{document}